\documentclass[11pt]{amsart}

\usepackage{palatino} 
\usepackage{hyperref}
\usepackage{tikz}
\usetikzlibrary{positioning}

\usepackage{amsmath,amssymb,amsthm,verbatim, amsfonts,amscd,flafter,epsf, epsfig,graphicx,verbatim,pinlabel,mathrsfs}
\usepackage[all]{xy}
\usepackage{epsf}
\usepackage[abs]{overpic}
\usepackage{epstopdf}
\usepackage{graphicx}

\usepackage{xcolor}
\usepackage{mathtools}
\usepackage{scalerel}

\definecolor{bred}{rgb}{1,0,.2}
\definecolor{blue}{rgb}{0,0,1}

\newtheorem{theorem}{Theorem}[section]
\newtheorem{lemma}[theorem]{Lemma}

\theoremstyle{definition}
\newtheorem{definition}[theorem]{Definition}

\def\leave#1{{}}

\def\R{\mathbb{R}}

\def\ms#1{{\textcolor{olive}{#1}}}

\title{A Reconstruction Lemma for Enhanced Bypass Sequences}

\author[Matthias Scharitzer]{Matthias Scharitzer}
\address{Uppsala University}
\email{matthias.scharitzer@math.uu.se}

\begin{document}

\maketitle

\begin{abstract} 
In the appendix of \cite{LSV}, the authors produce an invariant for contact structures on $\Sigma \times [0,1]$. In this paper, we prove a result announced there which states that this is in-fact a full invariant. As an application, we provide an independent proof of Eliashbergs theorem and show that various construction methods for contact structures are equivalent.

\end{abstract}

\section{Introduction}

\subsection{Background, notation and conventions}

We do not assume familiarity with the techniques used in \cite{LSV} and reintroduce the necessary language again in this paper. For the sake of brevity, we assume the reader is familiar with the basics of convex surface theory in low-dimensional contact geometry as presented in, e.g. the frankophone papers \cite{Gi,Gi2,Gi3}. The lecture notes \cite{Hcon,Econ} are also great resources. We also make use of the language of dynamical systems on $2d$-surfaces, we suggest \cite{Sot,KS} as useful resources.

In the following, we will consider $3$-dimensional contact manifolds $(M,\xi)$ where $\xi$ is assumed to be positive and co-orientable. If such a choice of co-orientation is important we denote it by $\alpha$. $\Sigma$ will denote a compact oriented surface, possibly with non-empty boundary. Often, we will take $M=\Sigma \times [0,1]_t$ and write $\Sigma_t$ for a specific $\Sigma \times \{t\}$. We reserve the $s$-variable for $1$-parameter families of contact structures $\xi_s$.

We denote characteristic foliations by $\mathcal{F}$ and representatives by $X$. Note that in our context foliations may be singular and are assumed to be oriented. If the boundary of $\Sigma$ is non-empty then a $0$ or $1$-parameter family of characteristic foliations $\mathcal{F}_t$ is assumed to fulfill the following properties: There is a set of annuli $\mathcal{A}_i$ so that $\partial \Sigma \subset \bigcup\partial \mathcal{A}_i$, the characteristic foliations $\mathcal{F}_t$ are Morse-Smale on each $\mathcal{A}_i$, independent of $t$ and parallel to the boundary of each $\mathcal{A}_i$.

In contrast to the usual conventions, we denote a singularity with a positive determinant as a node which contains several different distinct models from the dynamical systems point-of-view but this distinction is not necessary from the contact geometric side. Similarly, we call a singularity with a negative determinant a saddle. Since our foliations are characteristic, the traces of these singularities are assumed to be non-vanishing and so they have a well-defined sign. For the notation of these generic signed singularities in the graphical calculus, see Figure \ref{fig:Singularity}. An orbit connecting two singularities of opposite sign, emanating from a negative singularity, is called retrograde.

\begin{figure}
    \centering
    \includegraphics[width=1.25\linewidth,trim={7cm 0 0 0} ]{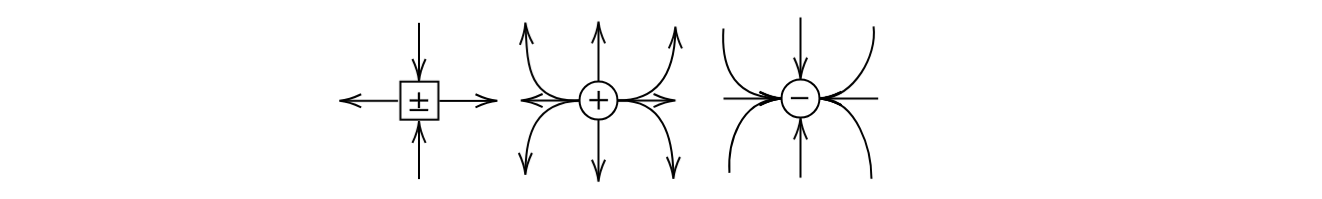}
    \caption{From left to right: A saddle, a positive node and a negative node.}
    \label{fig:Singularity}
\end{figure}

We denote by $\Gamma \subset \Sigma$ multicurves, called dividing curves, that are properly embedded and separating which means that they decompose $\Sigma$ into two (not necessarily connected) subsets. One-parameter families of these are denoted by $\Gamma(r)$ where $r$ usually corresponds to some subset of the $t$-parameter space. We denote by $c$ embedded arcs, called bypass arcs, on $\Sigma$ which are associated to a particular $\Gamma$ which transversely intersect $\Gamma$ in exactly $3$ points and whose endpoints lie on $\Gamma$.

When we write that $\xi_s$ is a contact isotopy on $\Sigma \times [0,1]$ then we mean that the characteristic foliations on $\Sigma_0$ and $\Sigma_1$ remain constant and the characteristic foliations on a neighborhood of the boundary of $\Sigma$ remains invariant throughout. Using Mosers stability trick to obtain flows from contact isotopies shows that these restrictions yield well-defined isotopies of $\Sigma \times [0,1]$ that induce the $\xi_s$.

\subsection{From Characteristic Foliations to Convex Surfaces}

Giroux \cite{Gi,Gi2,Gi3} started a program of studying $3$-dimensional contact manifolds $(M,\xi)$ by probing them with oriented surfaces $(\Sigma,\omega)$. Two of his earliest results are the important Reconstruction Lemmas:
\begin{lemma}(Local Reconstruction Lemma \cite[II. Proposition 1.2.]{Gi})  \label{closed local reconstruction} \label{partial Reconstruction Lemma}

    Let $\mathcal{F}$ be a characteristic foliation. Then there exists a contact structure $\xi$ on $\Sigma \times [0,1]$ so that $\xi$ prints $\mathcal{F}$ onto $\Sigma_{\frac{1}{2}}$. Any two such contact structures are locally contact isotopic.
\end{lemma}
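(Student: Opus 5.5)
The plan is to realize $\mathcal{F}$ through the standard form $\alpha = \beta_t + u_t\,dt$ on $\Sigma \times [0,1]$, where $\beta_t$ is a $1$-form on $\Sigma$ and $u_t$ a function on $\Sigma$, both depending on $t$. A form of this shape prints on $\Sigma_t$ the foliation $\ker \beta_t$. The contact condition $\alpha\wedge d\alpha>0$ becomes
\[
u_t\, d\beta_t + \beta_t \wedge (du_t - \dot\beta_t) > 0 .
\]

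\emph{Existence.} Since $\mathcal{F}$ is a characteristic foliation, it is defined by a $1$-form $\beta$ whose dual vector field $X$ (with $\iota_X\omega=\beta$) has nonzero divergence $\mathrm{div}_\omega X$ at each singularity. Put $\beta_t=\beta$ and $u_t=u$ independent of $t$, so the condition reduces to
\[
u\,d\beta+\beta\wedge du>0 .
\]
This says that the vector field $X$ expands the form $u\omega$ wherever the two are compared appropriately. Near a singular point $\beta=0$, and the condition reads $u\,d\beta>0$. Since $d\beta=(\mathrm{div}_\omega X)\,\omega\neq 0$ there, we choose the constant $u=\pm1$ locally, matching the sign of the singularity. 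Away from singularities $\beta\neq0$, so we may prescribe $du$ along a transverse direction, for instance taking $du(Y)$ large where $Y$ is transverse to $\ker\beta$, to force $\beta\wedge du$ to dominate $u\,d\beta$. Concretely, I would take $u$ with $\beta\wedge du = C\omega$ outside small discs around the singularities. This amounts to making $u$ strictly increasing along a field $Y$ with $\beta(Y)>0$ pointing "across" the leaves. One cannot do that globally, for example when there are closed leaves. The robust fix is to instead let $\beta_t = \beta + t\,\gamma$, choosing $\gamma$ with $\beta\wedge(-\gamma)>0$ away from the singular set (that is, $\gamma=-\iota_{JX}\omega$ for an almost complex $J$). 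With $u$ small and cut off near the singular discs, the term $-\beta\wedge\dot\beta_t=\beta\wedge\gamma'$ supplies positivity away from the singularities. Shrinking $[0,1]$ to a thin neighbourhood of $t=\tfrac12$ via $t\mapsto \tfrac12+\varepsilon(t-\tfrac12)$ keeps $\beta_t$ close to $\beta$, so the contact condition holds by openness. Rescaling back produces $\xi$ on $\Sigma\times[0,1]$ printing $\mathcal{F}$ on $\Sigma_{1/2}$.

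\emph{Uniqueness up to local isotopy.} Given two such $\xi^0,\xi^1$, I would first apply a diffeomorphism tangent to $\Sigma_{1/2}$ and isotopic to the identity to put both in the form $\ker(\beta^i_t+u^i_t\,dt)$ near $\Sigma_{1/2}$, with $\beta^i_{1/2}=f_i\beta$ for positive functions $f_i$. Dividing by $f_i$ normalizes to $\beta^i_{1/2}=\beta$. Then the linear interpolation $\alpha^s=(1-s)\alpha^0+s\alpha^1$ is checked to be contact on a thin neighbourhood of $\Sigma_{1/2}$. At $t=\tfrac12$ the contact condition
\[
u^s d\beta+\beta\wedge(du^s-\dot\beta^s)
\]
is linear in $(u,\dot\beta)$ once $\beta$ is fixed, so positivity is preserved under convex combination. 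Openness then extends it to a neighbourhood of $\Sigma_{1/2}$, uniformly in $s$ by compactness of $\Sigma$ and of $[0,1]_s$. Gray–Moser stability, applied relative to $\Sigma_{1/2}$ (where all $\alpha^s$ induce the same foliation) and relative to the fixed boundary annuli, yields the contact isotopy. The main care point is the normalization $\beta^i_{1/2}=\beta$ near the singularities. It is exactly this normalization that makes the contact condition convex, which is the key observation. I expect it to go through because the positive conformal factor can be absorbed into the contact form.
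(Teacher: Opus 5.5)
The paper gives no proof of this lemma; it is quoted from Giroux. Your argument is the standard proof of his result, and it is correct up to small repairs:
\begin{itemize}
\item \emph{Existence:} tilt $\beta$ in $t$ so that $-\beta\wedge\dot\beta_t>0$ off the singular set, and let a small $u$ supported near the singularities handle the zeros of $\beta$.
\item \emph{Uniqueness:} normalise both forms to restrict to $\beta$ on $\Sigma_{1/2}$, observe that the contact condition along $\Sigma_{1/2}$ is then linear in $(u,\dot\beta)$, and apply Gray--Moser.
\end{itemize}

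The repairs are as follows.

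\textbf{The tilt is centred at the wrong time.} As written, $\beta_t=\beta+t\gamma$ gives $\beta_{1/2}=\beta+\tfrac12\gamma$, which prints a rotated foliation on $\Sigma_{1/2}$. You want $\beta_t=\beta+(t-\tfrac12)\gamma$, and $\gamma'$ should read $-\gamma$.

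\textbf{The Moser step needs one more observation.} You should say why the flow exists on a fixed neighbourhood for all $s\in[0,1]$. Since $\dot\alpha^s=\alpha^1-\alpha^0$ vanishes on $T\Sigma_{1/2}$, the equation $\iota_{X_s}d\alpha^s|_{\xi^s}=-\dot\alpha^s|_{\xi^s}$ forces $X_s\in\xi^s\cap T\Sigma_{1/2}$ along $\Sigma_{1/2}$, with $X_s=0$ at singular points. Hence the flow preserves $\Sigma_{1/2}$. It generally slides points along leaves of $\mathcal{F}$, so ``relative to $\Sigma_{1/2}$'' holds only setwise.

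\textbf{The boundary convention is not respected.} The paper requires the movie to be $t$-independent on the boundary annuli, but your tilt rotates the foliation there as well. To fix this:
\begin{itemize}
\item cut $\dot\beta_t$ off on the annuli;
\item there, use a small multiple of a function $u_A$ with $u_A\,d\beta+\beta\wedge du_A>0$, which exists because the foliation is Morse--Smale and hence convex;
\item interpolate in a singularity-free collar, where $\beta\wedge(-\gamma)$ dominates the small error terms.
\end{itemize}
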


\begin{lemma}(Global Reconstruction Lemma \cite[Lemme 2.1.]{Gi2})

    Let $\mathcal{F}_t$ be a $1$-parameter family of characteristic foliations.
    The set of contact structures $\xi$ which print $\mathcal{F}_t$ onto $\Sigma_t$ for each $t$ is either empty or contractible.
\end{lemma}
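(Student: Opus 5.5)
Write $M=\Sigma\times[0,1]_t$ and write contact forms adapted to the product structure as
\[
\alpha = \beta_t + u_t\,dt ,
\]
with $\beta_t$ a $1$-form on $\Sigma$ and $u_t$ a function on $\Sigma$, both depending on $t$. The condition that $\xi=\ker\alpha$ prints $\mathcal{F}_t$ onto $\Sigma_t$ says that $\beta_t$ defines $\mathcal{F}_t$, up to a positive function. Fix representing forms $\beta_t^0$ for $\mathcal{F}_t$, depending smoothly on $t$ and fixed near $\partial\Sigma$ as in the conventions. Every such $\xi$ then has a unique co-oriented defining form
\[
\alpha = f_t\,\beta_t^0 + u_t\,dt , \qquad f_t>0 .
\]
Here $u_t$ is determined on the singular set of $\mathcal{F}_t$ by $f$, and is otherwise free. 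The plan is to show that, in the variables $(f,u)$, the contact condition cuts out a convex set, or at least a set fibred with convex fibres over a convex base. Such a set is contractible as soon as it is nonempty, which gives the lemma.

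\textbf{Step 1: the contact condition.} Compute
\[
\alpha\wedge d\alpha = \Big( f\,\beta^0\wedge d_\Sigma u \;+\; u\, d_\Sigma(f\beta^0) \;-\; f\,\beta^0\wedge \partial_t(f\beta^0) \Big)\wedge dt .
\]
Positivity of this form is the contact condition. Terms such as $u\,d_\Sigma(f\beta^0)$ and $f\beta^0\wedge\partial_t(f\beta^0)$ are not jointly linear in $(f,u)$. So I would normalize the problem to make it linear in the remaining unknown. Dividing $\alpha$ by $f$ replaces $\alpha$ by
\[
\beta^0 + v\,dt , \qquad v=u/f .
\]
This is allowed because the contact condition depends only on $\ker\alpha$. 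After this normalization the inequality becomes
\[
\beta^0\wedge d_\Sigma v + v\, d\beta^0 - \beta^0\wedge\partial_t\beta^0 > 0 ,
\]
which is affine in $v$, since $\beta^0$ is fixed. Because the $t$-direction is part of the data, the kernel of $\alpha$ corresponds bijectively to the function $v$ on $M$. The space of contact structures printing $\mathcal{F}_t$ is therefore identified with the set of $v$ satisfying an affine strict inequality, together with the boundary constraint. That set is convex.

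\textbf{Step 2: bookkeeping.} Several details need to be checked.
\begin{itemize}
\item \emph{Compatibility with the conventions.} Near $\partial\Sigma$ the foliations are fixed and Morse--Smale, so the boundary condition is an affine constraint on $v$ and preserves convexity.
\item \emph{Independence of the choice of $\beta^0$.} A different representative $g\beta^0$ with $g>0$ only reparametrizes $v\mapsto v/g$, which is a homeomorphism.
\item \emph{Topology.} Put the $C^\infty$ topology on the space of plane fields. Then $\xi\mapsto v$ is a homeomorphism onto an open convex subset of a Fréchet space. Such a subset is either empty or contractible by straight-line homotopy.
\end{itemize}

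\textbf{Main obstacle.} The hardest step is the identification in Step 1 at points where $\xi$ is tangent to $\Sigma_t$, which are the singular points of $\mathcal{F}_t$. There $\beta^0$ vanishes, so the $dt$-component cannot be normalized by dividing by $\beta^0$. We must also check that the line field $\ker\alpha$ still determines $v$ continuously. My first attempt would be to note that at a singularity the contact condition reduces to $v\,d\beta^0>0$. Since the singularities of a characteristic foliation have nonzero divergence, this forces $v\neq0$ there with a prescribed sign. The set of admissible $v$ remains convex, so one works with $v$ globally as the coordinate. The remaining point is that every contact $\xi$ printing $\mathcal{F}_t$ really arises from a smooth $v$, i.e. that $\xi$ is nowhere vertical. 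This holds because $\xi$ is transverse to $\partial_t$ exactly when $\beta\neq 0$ or $u\neq0$. At singular points $u\neq0$ is forced by the contact condition, so $\xi$ is transverse there too, and $v=u/f$ is a well-defined smooth coordinate.
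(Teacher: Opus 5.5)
The paper gives no proof of this lemma; it only cites \cite[Lemme 2.1.]{Gi2}. Your Step~1 is exactly Giroux's argument. Once the horizontal part of the defining form is normalized to the fixed family $\beta^0_t$, the contact condition $\beta^0\wedge d_\Sigma v+v\,d_\Sigma\beta^0-\beta^0\wedge\partial_t\beta^0>0$ is an affine strict inequality in $v$. So the admissible set is open and convex, hence empty or contractible. Your computation and the normalization by $f$ are correct.

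The ``Main obstacle'' paragraph, however, should be discarded, because its central claim is false.
\begin{itemize}
\item The plane field $\ker(\beta^0+v\,dt)$ contains $\partial_t$ exactly where $v=0$. The condition ``$\beta\neq0$ or $u\neq0$'' only says $\alpha\neq0$; it is not transversality to $\partial_t$.
\item Verticality cannot be avoided in general. At a singular point $\alpha=v\,dt$, so $v$ is nonzero with the sign of the singularity. Hence $v$ vanishes somewhere on every connected $\Sigma_t$ carrying singularities of both signs.
\item A concrete example: for a $t$-invariant form $\beta+u\,dt$ near a convex surface, $\xi$ is vertical precisely along the dividing set $\{u=0\}$.
\end{itemize}
Fortunately nothing depends on this claim. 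You divide by $f>0$, never by $u$ or $\beta^0$, so $v=u/f$ is defined everywhere and $v=0$ is just an allowed value.

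The real issue on the singular set $Z$ is that $f$, and hence $v$, must be uniquely, smoothly and continuously determined by $\xi$ there.
\begin{itemize}
\item Uniqueness holds because $Z$ has empty interior.
\item Smoothness and continuity follow from $d_\Sigma\beta^0_t\neq0$ on $Z$, which you derived. Near a point of $Z$, write $\beta^0_t=a\,dx+b\,dy$. Since $b_x-a_y\neq0$, one of $a,b$ has nonzero differential, say $b$. Then $f$ is recovered from the coefficient $fb$ of $\beta_t=f\beta^0_t$ by Hadamard division by $b$.
\end{itemize}
This division argument is what actually justifies the homeomorphism claim in your Topology bullet.

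There are also three minor slips:
\begin{itemize}
\item The defining form is unique only after normalization.
\item $u$ is not determined by $f$ on $Z$; only its sign is constrained.
\item Replacing $\beta^0$ by $g\beta^0$ sends $v\mapsto gv$, not $v/g$.
\end{itemize}
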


Note the slight difference in flavour: Any characteristic foliation locally names a unique isotopy class of contact structures. But this survives globally only if there is at least one contact structure which gives rise to it. It is easy to construct families where these are obstructed.

These results are, however, very unsatisfactory because a characteristic foliation and thus also a movie of characteristic foliations is a very rigid object. However, Giroux separated out a large (open and dense) set of subsurfaces $\Sigma$, called convex surfaces, where contact manifolds foliated by such are particularly flexible:

\begin{theorem} (Realization and Uniqueness Lemma, \cite[Lemme 2.3./Lemme 2.6.]{Gi2}) \label{Convex Flexibility} \label{Interpolation Lemma}

    Let $\Gamma_t$ be a $1$-parameter family of multicurves. Let $\mathcal{F}_t$ be a $1$-parameter family of characteristic foliations which is divided by $\Gamma_t$ for all $t$. Then there is a contact structure which prints $\mathcal{F}_t$ onto $\Sigma_t$ for each $t\in [0,1]$. 

    Furthermore, if one fixes $\mathcal{F}_0$ and $\mathcal{F}_1$ which are divided by $\Gamma_0$ and $\Gamma_1$ then the space of contact structures which print $\mathcal{F}_0$ onto $\Sigma_0$ and $\mathcal{F}_1$ onto $\Sigma_1$ and whose induced characteristic foliations $\mathcal{F}_t$ are divided by $\Gamma_t$ for each $t$ is non-empty and connected.
\end{theorem}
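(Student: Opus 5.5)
The plan is to work in the standard local model and reduce both assertions to convex-combination arguments for contact forms of the shape $\alpha = \beta_t + u_t\,dt$ on $\Sigma\times[0,1]$. Here $\beta_t$ is a $1$-form on $\Sigma$ and $u_t$ a function on $\Sigma$, both depending on $t$. The contact condition $\alpha\wedge d\alpha>0$ reads
\[ u_t\, d\beta_t + \beta_t\wedge(du_t - \dot\beta_t) > 0 \]
as an area form on $\Sigma$. The characteristic foliation printed on $\Sigma_t$ is the kernel of $\beta_t$, so prescribing $\mathcal{F}_t$ means prescribing $\beta_t$ up to multiplication by a positive function.

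\textbf{Existence.} First I would use that $\mathcal{F}_t$ is divided by $\Gamma_t$: there are a vector field $X_t$ directing $\mathcal{F}_t$ and a volume form $\omega_t$ with $\operatorname{div}_{\omega_t}X_t$ positive on $\Sigma_t^+$ and negative on $\Sigma_t^-$, and $X_t$ pointing transversely out of $\Sigma^+_t$ along $\Gamma_t$. These choices can be made smoothly in $t$, after isotoping $\Gamma_t$ to a fixed multicurve by an ambient isotopy that we absorb. Set $\beta_t=\iota_{X_t}\omega_t$, and let $u_t$ be a function with $u_t>0$ on $\Sigma_t^+$, $u_t<0$ on $\Sigma_t^-$ and $\Gamma_t=\{u_t=0\}$. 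The key identity is that $u\,d\beta+\beta\wedge du>0$ is exactly Giroux's dividing condition for the vertically invariant form $\beta+u\,dt$.

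The new feature is the term $-\beta_t\wedge\dot\beta_t$. To kill it, I would rescale $t$: replace $(\beta_t,u_t)$ by $(K\beta_t, K u_t)$, or equivalently reparametrize by $C\gg1$ so that $\dot\beta$ acquires a small factor. Concretely, take $\alpha=\beta_t+Ku_t\,dt$. The contact expression becomes
\[ K\bigl(u\,d\beta+\beta\wedge du\bigr) - \beta\wedge\dot\beta , \]
which is positive for $K$ large by compactness, since the first bracket is a positive area form uniformly in $t$. Near $\partial\Sigma$ the foliation is $t$-independent, so the form is compatible with the fixed boundary model. This gives existence.

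\textbf{Uniqueness/connectedness.} Take two such structures $\xi^0,\xi^1$ with the given $\mathcal{F}_0,\mathcal{F}_1$, both dividing $\Gamma_t$. By Gray/Moser it suffices to connect them through contact structures with the same boundary behaviour. Write each as $\beta^i_t+u^i_t\,dt$, where $\beta^i_t$ directs the characteristic foliation of $\xi^i$ on $\Sigma_t$ (which is divided by $\Gamma_t$), with $\beta^0_t=\beta^1_t$ for $t=0,1$. The sign of $u^i_t$ is forced on the dividing regions up to a diffeomorphism; this is the convexity content. The plan has three steps:
\begin{enumerate}
\item Stretch both to the large-$K$ regime by a path of reparametrizations fixing neighbourhoods of $t=0,1$. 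This remains contact because the horizontal dividing condition dominates.
\item Isotope $u^1_t$ to $u^0_t$ through functions with the same sign pattern; this is convex in the regions off $\Gamma_t$.
\item Take the convex combination of the $\beta$'s. Its kernels are still divided by $\Gamma_t$ because the divergence conditions are convex for a common $\omega_t$, and Giroux's condition $u\,d\beta+\beta\wedge du>0$ is linear in $\beta$ for fixed $u$.
\end{enumerate}
Everything stays contact for $K$ large, uniformly along the path.

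\textbf{Main obstacle.} The hard step is the rescaling in step 1 and its compatibility with the fixed endpoints. Near $t=0,1$ one cannot freely stretch, since the characteristic foliations of $\xi^0,\xi^1$ on $\Sigma_0,\Sigma_1$ are only equal, not the germs. I would first apply the Local Reconstruction Lemma (\ref{closed local reconstruction}) to make the germs along $\Sigma_0$ and $\Sigma_1$ agree after a local contact isotopy. I would then confine the stretching to $[\epsilon,1-\epsilon]$ and match to a collar there using an interpolation of $K$ from $1$ to its large value. Controlling the error term $K'(t)\,u\,dt$-type contributions there is where care is needed. I expect the fix to be choosing $u$ near $\partial$-slabs vertically invariant, so that the extra terms vanish.
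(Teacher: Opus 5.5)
The paper gives no proof of this statement; it is quoted from Giroux, so I assess your argument on its own. The existence half is the standard argument and is fine. This requires choosing $u_t$ so that $u_t\,d\beta_t+\beta_t\wedge du_t>0$ actually holds, not just the sign pattern; smoothness in $t$ then follows because this inequality is linear in $u$ for fixed $\beta$, so local choices patch by a partition of unity in $t$.

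The uniqueness half has a genuine gap at Step 1. The hypothesis on $\xi^i$ only says that each $\ker\beta^i_t$ is divided by $\Gamma_t$, i.e.\ that \emph{some} function satisfies this inequality for $\beta^i_t$. It says nothing about the actual $dt$-coefficient $u^i_t$ of the given form, and your claim that its sign is forced on $\Sigma_t^\pm$ is false. For example, pull back a vertically invariant model $\beta+u\,dt$ by $(x,t)\mapsto(\phi_t(x),t)$, where $\phi_t$ is the flow of a vector field $Y$ supported in the interior of $\Sigma_+$. The result lies in the space for $\mathcal F_0=\ker\beta$, $\mathcal F_1=\ker\phi_1^*\beta$, $\Gamma_t\equiv\Gamma$. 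Its $dt$-coefficient at $t=0$ is $u+\beta(Y)$, and $(u+\beta(Y))\,d\beta+\beta\wedge d(u+\beta(Y))<0$ on any region of $\Sigma_+$ where $d\beta>0$ and $\beta(Y)$ is a large negative constant. With $A=u\,d\beta+\beta\wedge du$ and $B=\beta\wedge\dot\beta$, the form $\beta_t+k\,u_t\,dt$ is contact iff $kA-B>0$, so wherever $A<0$ your path $k\in[1,K]$ leaves the contact condition. Moreover, a reparametrization $t=\tau(s)$ multiplies both $A$ and $B$ by $\tau'$, so it cannot move you into the large-$K$ regime at all. Step 2 has a related flaw: off $\Gamma_t$ the inequality for fixed $\beta^1$ reads $d(\beta^1/u)>0$, and sign-preserving changes of $u$ do not preserve this.

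The missing idea is that for fixed $\beta_t$ the full contact condition $u\,d\beta+\beta\wedge(du-\dot\beta)>0$ is affine in $u$. Let $\tilde u^i_t$ satisfy $\tilde u\,d\beta^i_t+\beta^i_t\wedge d\tilde u>0$ with zero set $\Gamma_t$, and take $K$ large. Then the segment from $u^i_t$ to $K\tilde u^i_t$ consists of contact forms printing exactly the movie of $\xi^i$. This is the argument behind the Global Reconstruction Lemma stated in the paper, which you could simply invoke. Next, multiply $\beta^1_t+K\tilde u^1_t\,dt$ by $g=\tilde u^0_t/\tilde u^1_t$. This is smooth and positive, since both functions vanish transversely exactly along $\Gamma_t$ with the same signs. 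The kernel is unchanged and the $dt$-coefficients now agree, after which your Step 3 goes through; enlarging $K$ uniformly in $s$ is harmless once $A>0$. Because $\beta^0_t$ and $g\beta^1_t$ are positive multiples of each other at $t=0,1$, the foliations printed on $\Sigma_0,\Sigma_1$ never change along this path. So the obstacle you single out does not arise: $d\bigl(K(t)u_t\,dt\bigr)=K(t)\,d_\Sigma u_t\wedge dt$ has no $K'$ term, and no germ matching via the Local Reconstruction Lemma is needed. You also did not address non-emptiness; the same convexity in $\beta$ for a common $u$ supplies the interpolating movie from $\mathcal F_0$ to $\mathcal F_1$ divided by $\Gamma_t$.
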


\subsection{Beyond Convexity: Giroux Normal Form and Enhanced Bypass Sequences}

In this section, we wish to study flexibility of contact structures which are not foliated by convex surfaces.

\begin{lemma} (\cite[Proposition 2.4.]{Gi2})

    Let $\Sigma$ be a compact surface with Legendrian boundary. Assume the characteristic foliation fulfills the Poincaré-Bendixson property. Then the following are equivalent:
    \begin{enumerate}
        \item $\Sigma$ is convex.
        \item There are no orbits flowing from a negative singularity to a positive singularity and there are no degenerate closed orbits.
    \end{enumerate}
\end{lemma}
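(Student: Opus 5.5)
We prove the equivalence through the dividing-set characterization: $\Sigma$ is convex if and only if its characteristic foliation $\mathcal{F}$ is divided by some multicurve $\Gamma$. One direction is the standard observation that a contact vector field transverse to $\Sigma$ cuts out a dividing set. The other direction is supplied by the flexibility statements above: once $\mathcal{F}$ is divided by $\Gamma$, we take the constant family $\mathcal{F}_t=\mathcal{F}$, $\Gamma_t=\Gamma$. The Realization and Uniqueness Lemma then gives a vertically invariant model contact structure printing $\mathcal{F}$ on every slice. By the Local Reconstruction Lemma, this model is contact isotopic near $\Sigma$ to the given $\xi$, so $\partial_t$ pulls back to a transverse contact vector field. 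The statement is therefore reduced to: $\mathcal{F}$ admits a dividing set if and only if (2) holds.

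\textbf{(1)$\Rightarrow$(2).} Suppose $\Gamma$ divides $\mathcal{F}$, with $\Sigma\setminus\Gamma=\Sigma_+\sqcup\Sigma_-$. There is a representative $X$ of $\mathcal{F}$ and an area form $\omega$ such that $\operatorname{div}_\omega X$ is positive on $\Sigma_+$, negative on $\Sigma_-$, and $X$ points transversely out of $\Sigma_+$ along $\Gamma$.

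\begin{itemize}
\item Positive singularities lie in $\Sigma_+$ and negative ones in $\Sigma_-$.
\item Since flow lines only cross $\Gamma$ from $\Sigma_+$ into $\Sigma_-$, no orbit can go from a negative singularity to a positive one.
\item A closed orbit cannot meet $\Gamma$, because it would have to cross back. It therefore lies in $\Sigma_+$ or $\Sigma_-$, where the divergence has a fixed sign. The derivative of the return map is $\exp\bigl(\oint \operatorname{div}_\omega X\bigr)\neq 1$, so the orbit is non-degenerate.
\end{itemize}

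\textbf{(2)$\Rightarrow$(1).} This is the substantive direction. We construct $\Gamma$ as the boundary of a region $U_+$ built from the repelling side of the dynamics.

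\begin{itemize}
\item Start from small disks around the positive singularities, thin annuli around repelling closed orbits, and a collar of the positively oriented Legendrian boundary components. Each of these is hyperbolic or non-degenerate, so it has a neighborhood with outward-transverse boundary.
\item Flow these neighborhoods forward by $X$ and thicken them along the unstable separatrices of positive saddles. Every such separatrix ends at a negative node, a negative saddle, or an attracting closed orbit: ending at a positive singularity would contradict (2), and the Poincaré–Bendixson property rules out any other limit set.
\item Let $U_+$ be the union of the flowed neighborhoods with thin bands around these positive separatrices, smoothed so that $X$ is transverse outward along $\partial U_+$. Set $\Gamma=\partial U_+$, after discarding the boundary pieces lying on $\partial\Sigma$.
\item The complement $U_-$ deformation retracts onto the negative singularities, the attracting orbits, and the stable separatrices of negative saddles. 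Any such separatrix coming from $U_+$ crosses $\Gamma$ transversely.
\end{itemize}

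We then choose $\omega$ by hand. Near the singularities and the closed orbits, the sign conditions (nonzero trace, non-degeneracy) allow divergence of the correct sign. Along the flow tubes, $\omega$ is extended by prescribing its growth along orbits. We take it expanding in $U_+$ and contracting in $U_-$, matched across $\Gamma$, where $X$ is transverse.

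\textbf{Main obstacle.} The hard part is this global construction of $U_+$. We must ensure three things: that it is a genuine subsurface whose boundary is transverse to $X$; that no orbit re-enters $U_+$; and that $\Gamma$ separates the positive from the negative singularities.

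All of these follow from the absence of retrograde orbits together with the Poincaré–Bendixson property, which forces every $\omega$- and $\alpha$-limit set to be a singularity, a closed orbit, or a polycycle. The delicate case is polycycles. A polycycle made only of saddle connections between saddles of the same sign lies entirely in $U_+$ or in $U_-$. A mixed polycycle would require an orbit from a negative to a positive saddle, which (2) excludes. A degenerate closed orbit, which could not be made expanding or contracting, is excluded by hypothesis.

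I would first carry out the case where $\mathcal{F}$ is Morse–Smale. There $U_+$ is simply a regular neighborhood of the union of the positive nodes, the positive saddles, and their unstable separatrices. The general case would then be reduced to it by a perturbation of $\mathcal{F}$ that preserves (2) and the isotopy class of the contact germ.
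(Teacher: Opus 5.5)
The paper does not prove this lemma; it quotes Giroux [Gi2, Prop.~2.4], so your argument has to stand on its own. Your reduction to ``$\mathcal{F}$ admits a dividing set'' is fine. One small correction: the Realization Lemma as stated does not give $t$-invariance, so write $\alpha=u\,dt+\iota_X\omega$ with $u\,\mathrm{div}_\omega X-X(u)>0$ directly. Your proof of (1)$\Rightarrow$(2) is correct. The gap is in (2)$\Rightarrow$(1), where $U_+$ is built backwards and hypothesis (2) is used in the wrong place.

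\emph{Why the construction fails.} Since $X$ must exit $U_+$ along $\Gamma$, $U_+$ is backward-invariant. It must therefore contain the entire backward orbit of each of its points, in particular the \emph{stable} separatrices of every positive saddle and their $\alpha$-limit sets. This has three consequences for your sketch:
\begin{itemize}
\item A small disk around a saddle never has outward-transverse boundary.
\item A band around an \emph{unstable} separatrix has inflowing sides: in linearizing coordinates, $\dot y=\lambda_s y$ points into $\{|y|<\delta\}$.
\item You observe that these unstable separatrices end at negative nodes, negative saddles or attracting orbits. So a regular neighbourhood of positive nodes, positive saddles and their unstable separatrices (your Morse--Smale description) contains sinks, and its boundary cannot separate.
\end{itemize}

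\emph{Where (2) actually enters.} Hypothesis (2) only forbids orbits that start at a negative singularity and end at a positive one. It says nothing about where an orbit leaving a positive saddle goes. Hence ``ending at a positive singularity would contradict (2)'' is false: positive--positive connections are allowed, as your own polycycle discussion presupposes. The correct use of (2) is for the stable separatrices of positive saddles, which cannot come from negative singularities. By Poincar\'e--Bendixson, their $\alpha$-limits are then positive singularities, repelling closed orbits or positive polycycles. So the union of positive singularities, repelling closed orbits and stable separatrices of positive saddles is a compact set avoiding all negative singularities and attractors, and $U_+$ should be a suitably shaped neighbourhood of it.

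\emph{Missing local arguments.} To enclose a positive polycycle by an outward-transverse curve, you need it to be repelling. This follows from $\lambda_u+\lambda_s>0$ at each saddle, giving Dulac exponent $\prod_i|\lambda^i_s|/\lambda^i_u<1$. Semi-hyperbolic singularities with nonzero trace need a similar local argument. Neither appears in your sketch.

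\emph{The perturbation reduction does not work.} Reducing the general Poincar\'e--Bendixson case to the Morse--Smale case fails because convexity is open but not closed. A perturbation $\mathcal{F}'$ with different dynamics is not conjugate to $\mathcal{F}$, so it determines a different germ along $\Sigma$. Its being divided therefore says nothing about $\mathcal{F}$: a foliation with a degenerate closed orbit is non-convex, yet it is a limit of Morse--Smale ones. ``Preserving the isotopy class of the contact germ'' does not help either, since a surface isotopic to a convex surface need not be convex. So the general case (positive--positive connections, polycycles, degenerate singularities) must be handled directly, and your ``Main obstacle'' paragraph only asserts that this can be done.
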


In particular, the open and dense set of Morse-Smale foliations fulfill this property. Using this description, some foliation-theory and rather involved constructions one can produce the following standard form for contact forms:

\begin{definition}
    Let $\xi$ be a contact form on $\Sigma \times [0,1]$ so that $(1)$ the movie of characteristic foliations is generic as a $1$-parameter family of foliations and $(2)$ $\Sigma_t$ is convex except at finitely many times $t_1,\dots,t_n$ where its characteristic foliation is Morse-Smale except for exactly one retrograde connection. We say that $\xi$ is in Giroux normal form.
\end{definition}

\begin{theorem}(Giroux \cite[Lemme 15]{Gi3}) \label{thm:GNF}

    Let $\xi$ be a contact structure so that $\Sigma_0$ and $\Sigma_1$ are convex and the induced characteristic foliations are Morse-Smale. Then $\xi$ is contact isotopic to a contact form in Giroux normal form.
\end{theorem}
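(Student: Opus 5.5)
The plan is to put the movie of characteristic foliations $\mathcal{F}_t$ on $\Sigma_t$ into general position by a small contact isotopy. We then read off from Giroux's convexity criterion (\cite[Proposition 2.4.]{Gi2}) exactly where convexity fails. First, by a $C^\infty$-small perturbation of $\xi$ relative to $\Sigma_0\cup\Sigma_1$ and to the collar annuli $\mathcal{A}_i$, we arrange that the $1$-parameter family $t\mapsto \mathcal{F}_t$ is generic in the sense of bifurcation theory of vector fields on surfaces (Sotomayor \cite{Sot}). That is, $\mathcal{F}_t$ is Morse--Smale for all but finitely many $t$. At each exceptional $t_i$ exactly one codimension-one degeneracy occurs: a saddle--node birth/death, a saddle connection, a degenerate (semi-stable) closed orbit, or a Hopf-type bifurcation. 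To realize such a perturbation by a contact structure, I would perturb the family of foliations, which is an open condition. The Global Reconstruction Lemma together with the Local Reconstruction Lemma \ref{closed local reconstruction} then lets us rebuild a nearby contact structure printing the perturbed movie. Moser's trick then gives a contact isotopy fixing $\Sigma_0,\Sigma_1$ and the collars.

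Next we classify which codimension-one events obstruct convexity. At a Morse--Smale time, \cite[Proposition 2.4.]{Gi2} shows $\Sigma_t$ is convex, since Morse--Smale foliations have no degenerate closed orbits and no saddle connections at all. Hopf bifurcations cannot occur for characteristic foliations, because the divergence (trace) at singularities is nonzero. Saddle--node births and deaths occur between singularities of the same sign and leave convexity intact, since no retrograde orbit or degenerate cycle appears. Saddle connections between same-sign singularities are harmless for convexity. The remaining bad events are (a) degenerate closed orbits and (b) retrograde saddle connections, i.e.\ from a negative to a positive singularity. Event (b) is exactly the allowed non-convex event in Giroux normal form.

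The main obstacle is eliminating (a), and also the same-sign saddle connections and saddle--node events if one insists on Morse--Smale at every $t_i$. For degenerate closed orbits, I would first isotope so that every closed orbit is nondegenerate whenever it exists. A closed orbit of a characteristic foliation is a Legendrian curve, and its holonomy is governed by the contact condition. I would use a local model near the annulus swept out by the orbits in $t$, together with a Giroux-flexibility argument along a transverse annulus, to push the semistable-orbit event off by an isotopy. The same tool, Giroux flexibility (Theorem \ref{Convex Flexibility}) on short time intervals where $\Sigma_t$ is convex, replaces the foliation movie on $[t_i-\epsilon,t_i+\epsilon]$ around a harmless event. There we keep the dividing sets $\Gamma_t$ and choose a Morse--Smale realization throughout, so the event disappears. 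Concretely, on each interval where the surfaces remain convex we apply the uniqueness half of Theorem \ref{Interpolation Lemma}. It says any two contact structures with the same ends and dividing sets $\Gamma_t$ are connected, so we may swap in a family that is Morse--Smale at all times, relative to the endpoints. Only the isolated retrograde-connection times survive, each with a single retrograde orbit, and the foliation there is otherwise Morse--Smale by genericity. The delicate point I expect is making the degenerate-orbit elimination compatible with keeping the ends fixed, and checking that the intervals around the $t_i$ can be chosen disjoint from the isolated retrograde times.
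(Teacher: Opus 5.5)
The paper gives no proof of its own here. It quotes Giroux \cite[Lemme 15]{Gi3} and refers to the appendix of \cite{LSV} for an exposition. Your bookkeeping of which codimension-one events obstruct convexity is fine. But your first step is false, and everything after it rests on it. A $C^\infty$-small perturbation cannot in general make $\mathcal{F}_t$ Morse--Smale for all but finitely many $t$. On surfaces of positive genus, the set of non-Morse--Smale parameters of a generic arc need not be finite, and this persists under perturbation. Sotomayor's analysis of the codimension-one strata does not give finiteness, and the contact condition does not help.

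Here is an example. On $T^2\times[0,1]$ take $\xi=\ker(\sin\phi(t)\,dx+\cos\phi(t)\,dy)$ with $\phi'>0$ and values in $(-\pi/2,\pi/2)$. Perturb it near $t=0,1$ so that both ends are Morse--Smale. After any $C^1$-small perturbation, the slice foliations are still nonsingular and still have $\{x=0\}$ as a closed transversal. Their rotation number depends continuously on $t$ and sweeps a nontrivial interval. So uncountably many slices have irrational rotation number, and all their leaves are dense (Denjoy). Such slices are not even convex: a dense leaf would have to cross the dividing set, which is nonempty on a closed surface, from the negative back to the positive region. Reaching Giroux normal form therefore needs a modification that is $C^0$-small but $C^\infty$-large. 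In Giroux's setting this is, e.g., creating cancelling pairs of singularities to cut recurrent leaves and closed orbits, and this idea is missing from your plan.

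Your treatment of degenerate closed orbits is a second gap, and the proposed mechanism cannot work as stated. A saddle-node of closed orbits is a codimension-one event that survives perturbation of the family. Moreover, the dividing set jumps across it non-isotopically: on an annulus crossed by leaves, the birth of an attracting/repelling pair of closed orbits turns one dividing curve into three parallel ones. Dividing sets of a family of convex slices move only by isotopy. So no flexibility argument based on Theorem \ref{Convex Flexibility} can remove the event while keeping the nearby slices convex. The event has to be traded for retrograde saddle--saddle connections, e.g.\ by creating singularity pairs on the closed orbits.

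Making every convex slice Morse--Smale is neither possible in general nor needed. It is impossible because the topological type is constant along a path of Morse--Smale foliations, so ends with different numbers of singularities cannot be joined. It is unnecessary because Giroux normal form only requires convexity away from the $t_i$, so saddle--node and same-sign connection events can simply stay. A minor point: the Global Reconstruction Lemma gives no existence. Small changes of the movie are realised by openness of the contact condition plus Gray's theorem.
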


For an exposition of this construction, see the appendix of \cite{LSV}. Now to such a Giroux normal form, one can use the following procedure to assign a combinatorial invariant:

Girouxs Crossing Lemma \cite[Lemme 2.13.]{Gi2} tells us that around a retrograde connection between two saddles the characteristic foliations evolve as in Figure \ref{fig:Crossing-Lemma}. The red arc drawn there allows us to define something which we call a bypass arc. The name is, a priori, misleading because the arc we draw is not even Legendrian. We will resolve the origin of this name in Section \ref{Sec:Application 2}, so finally this should cause no confusion.

\begin{figure}
    \centering
    \includegraphics[width=1.25\linewidth,trim={5cm 1cm 0 1cm} ]{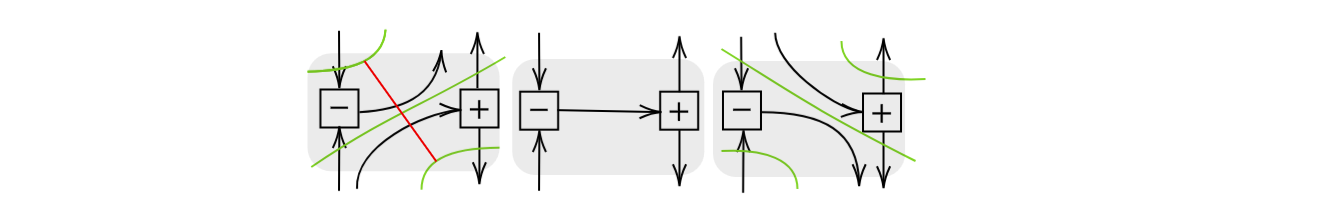}
    \caption{From left to right the evolution of a retrograde connection in a positively co-orientable contact manifold. The green lines indicate the dividing curve and the red arc is the bypass arc.}
    \label{fig:Crossing-Lemma}
\end{figure}

Away from the retrograde connections the set of dividing curves varies smoothly with $t$, together with the assignment of a bypass arc this leads to the following Definition, first introduced in \cite{LSV}:

\begin{definition} An \textit{enhanced bypass sequence}  is a list \[\mathcal{EB} =\big(\Gamma_0(r), c_1, \Gamma_1(r),\dots, c_n, \Gamma_n(r)\big)\] where $\Gamma_i(r)$ is a $1$-parameter family of multicurves such that 
\begin{enumerate}
\item $\Gamma_i(r)$ divides $\Sigma$ into two components;
\item  $c_i$ is a bypass arc on $\big(\Sigma,\Gamma_{i-1}(1)\big)$ such that $c_i$ has a distinguished disc neighbourhood $D_i$;
\item $\Gamma_i(0)=\Gamma_{i-1}(1)$ outside of $D_i$; and
\item within $D_i$, the multicurve $\Gamma_i(0)$ is the result of altering $\Gamma_{i-1}(1)$ by a bypass along $c_i$. 
\end{enumerate}
Below we fix $\Gamma_{start}:=\Gamma_0(0)$ and $\Gamma_{end}:=\Gamma_n(1)$.  
\end{definition}

\begin{definition}\label{def:bseq} 

The contact structure $\xi$ on $\Sigma \times [0,1]$ in Giroux normal form \textit{admits the enhanced bypass sequence} $\mathcal{EB}$ if there exists $\epsilon>0$ such that the following hold:
\begin{enumerate}
\item $\Gamma_{start}$ divides the characteristic foliation on $\Sigma\times \{0\}$ and  $\Gamma_{end}$ divides the characteristic foliation on $\Sigma\times\{1\}$;
\item for $i=1,\dots,n+1$, there is an identification of the $i^{th}$ interval in the list \[[0,t_1-\epsilon]_t,[t_{i-1}+\epsilon, t_i-\epsilon]_t, [t_n+\epsilon, 1]_t\] with $[0,1]_r$ such that $\Gamma_{i-1}(r)$ divides $\Sigma\times \{r\}$; and 
\item\label{it:2} for $t\neq t_i$ in the interval $[t_i-\epsilon,t_i+\epsilon]$, the multicurve $\Gamma_i(0)\vert_{\Sigma\setminus D_i}$ can be extended as a dividing curve for the charactersitic foliation of $\Sigma\times \{t\}$.
\end{enumerate}
\end{definition}

\begin{definition}\label{def:ebseqequiv} Two enhanced bypass sequences $\mathcal{EB}$ and $\mathcal{EB}'$ are \textit{equivalent} if they are related by 
a finite sequence of the following: 
\begin{itemize} 
\item Isotopy

if $\mathcal{EB}$ and $\mathcal{EB}'$ are connected by a 1-parameter family of enhanced bypass sequences with $\Gamma_{start}$ and $\Gamma_{end}$ fixed throughout, then $\mathcal{EB}$ and $\mathcal{EB}'$ are equivalent; 
\item  Far Commutation

  if $D_i\cap D_{i+1}=\emptyset$ and if $\Gamma_i(r)$ and $\Gamma_i'(r)$ are $\ms{r}$-independent,  then 
\[\dots ,\Gamma_{i-1}(r),c_i, \Gamma_i, c_{i+1}, \Gamma_{i+1}(r),\dots \text{ \\ \ and \  \ } \dots, \Gamma_{i-1}(r),c_{i+1}, \Gamma_i', c_{i}, \Gamma_{i+1}(r),\dots\] are equivalent
; and
\item Trivial Insertion

 if $c_T$ is a trivial bypass arc on $(\Sigma,\Gamma_i)$  and $\Gamma_T(r)$ is an isotopy supported near $c_T$ and its trivialising bigon such that $\Gamma_T(1)=\Gamma_i(1)$,  then   \[\dots ,\Gamma_{i}(r),c_i,\Gamma_{i+1}(r),\dots \text{ \ \ \ and \ \ \ }  \dots ,\Gamma_{i}(r),c_T,\Gamma_{T}(r),c_i,\Gamma_{i+1}(r),\dots\] are equivalent.
\end{itemize}
\end{definition}

The following theorem was established in the appendix of \cite{LSV}:

\begin{theorem} (\cite[Theorem A.24.]{LSV}) \label{thm:mainLSV}

Suppose that $\xi$ and $\xi'$ are contact structures on $\Sigma \times I$  in Giroux normal form that are isotopic relative to $\Sigma \times \{0,1\}$. Then enhanced bypass sequences admitted by $\xi$ and $\xi'$ are equivalent.  
\end{theorem}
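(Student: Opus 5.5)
The plan is a parametric version of the construction behind Theorem~\ref{thm:GNF}, followed by a bifurcation analysis. Let $\xi_s$, $s\in[0,1]$, be the isotopy rel $\Sigma\times\{0,1\}$ from $\xi_0=\xi$ to $\xi_1=\xi'$. I would first perturb $\xi_s$, rel endpoints and rel a neighbourhood of $\partial\Sigma$, so that the two-parameter family of characteristic foliations $\mathcal{F}_{s,t}$ on $\Sigma_t$ is generic as a $2$-parameter family of planar vector fields. Genericity then gives the following picture in the $(s,t)$-square.
\begin{itemize}
\item Outside a codimension-one graph $G$, the foliation $\mathcal{F}_{s,t}$ is Morse--Smale and convex.
\item The edges of $G$ are arcs of parameters carrying exactly one retrograde saddle connection. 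I would also allow edges carrying birth--death pairs or other non-retrograde saddle connections; these do not change the dividing set up to isotopy.
\item The vertices of $G$ are isolated codimension-two events.
\end{itemize}
This step is a Thom transversality argument in the space of contact structures. It is analogous to the one-parameter argument in the appendix of \cite{LSV}, and the flexibility needed to move the foliation freely comes from Lemma~\ref{partial Reconstruction Lemma} and the Global Reconstruction Lemma.

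Second, I would read off enhanced bypass sequences along vertical slices $s=\mathrm{const}$. For a generic $s$, the slice is in Giroux normal form. By the Crossing Lemma it admits an enhanced bypass sequence, obtained by recording the dividing curves $\Gamma(r)$ on the convex intervals and the bypass arc at each crossing of $G$. Between consecutive vertex values of $s$, the crossings move continuously. The dividing sets and bypass arcs then move by a $1$-parameter family with $\Gamma_{start},\Gamma_{end}$ fixed, because the endpoints are fixed and the relevant foliations are convex there. This is exactly the \emph{Isotopy} move. Nongeneric $s$-values come in two kinds: a tangency of an edge of $G$ with the vertical direction, and a vertex of $G$. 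It therefore suffices to check that crossing each such value changes the sequence by a composite of the moves in Definition~\ref{def:ebseqequiv}.

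Third, I would classify these codimension-two events and check each one.
\begin{itemize}
\item \textbf{Vertical tangency (fold of $G$).} Two retrograde connections of the same type are born or cancel in a single slice. Here I expect the two bypass arcs to be the same arc traversed in opposite directions in $t$. The composite therefore acts trivially on $\Gamma$, and the arc first performed is a trivial bypass whose trivialising bigon is formed with the second arc. The event is then a \emph{Trivial Insertion}, possibly combined with Isotopy.
\item \textbf{Transverse crossing of two edges with disjoint supports.} Two retrograde connections occur in disjoint regions of $\Sigma$, and their order in $t$ swaps. After shrinking the discs $D_i$ so that they are disjoint and making $\Gamma$ $r$-independent between them, this is \emph{Far Commutation}.
\item \textbf{Interacting events.} The remaining cases are those in which two connections share a saddle, or a retrograde connection coincides with a birth--death or with a degenerate closed orbit.
\end{itemize}

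The main obstacle is the last family. Two retrograde connections sharing a saddle can give two bypass arcs whose discs overlap, so Far Commutation does not apply directly. For these I would first try to show that such vertices can be removed by a further perturbation. The idea is to deform rel endpoints so that interacting connections are pushed to different $s$-values, using that on a small disc the characteristic foliation can be modified freely by the Local Reconstruction Lemma. Alternatively, I would check the local models case by case, in the style of Huang's bypass relations, and verify that each reduces to Isotopy combined with Trivial Insertions. I would also need to rule out the creation of degenerate closed orbits in the parametric family, or handle them as in Giroux's proof of Theorem~\ref{thm:GNF}. Finally, since $\xi_0,\xi_1$ are in Giroux normal form only up to the perturbation of the first step, I would note that a small perturbation of a Giroux normal form changes its admitted enhanced bypass sequence only by Isotopy. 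Concatenating the moves then proves the theorem.
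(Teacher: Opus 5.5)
The paper does not reprove this theorem; it imports it from the appendix of \cite{LSV}. Your Cerf-type outline is the natural strategy, but there is a genuine gap in how you match codimension-two events to the moves of Definition~\ref{def:ebseqequiv}, and it sits exactly where Trivial Insertion has to come from.

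Your \emph{fold} case cannot occur for retrograde edges. By Giroux's Crossing Lemma (Figure~\ref{fig:Crossing-Lemma}), a retrograde saddle--saddle connection is always broken in the same direction as $t$ increases. Concretely, the splitting $\Delta(s,t)$ between $W^u(h_-)$ and $W^s(h_+)$ changes sign in one prescribed direction (I believe Giroux's computation even gives $\partial_t\Delta\neq 0$ with sign fixed by the contact condition). So the retrograde part of $G$ is never tangent to the slices $\{s=\text{const}\}$, and a connection is never made and unmade within one slice. Your description of the fold also fails on its own terms. In the disc model of Section~\ref{Sec:Application 2}, returning to the initial dividing set after a non-trivial bypass takes two further non-trivial bypasses from the same side, not one, and none of these is trivial.

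Without folds, the moves you actually justify (Isotopy and Far Commutation) never change the number of bypasses in a slice. That number does change, for instance between a structure with one trivial bypass and one with none, as in Lemma~\ref{Injectivity 2}. There the retrograde edge leaving $\{s=0\}$ cannot fold, cannot reach $\{s=1\}$, and cannot reach the convex slices $\{t=0\}$, $\{t=1\}$. So it must end at an interior vertex.

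That vertex is one of the ``interacting events'' you leave open, and it cannot be perturbed away. Transverse codimension-two points persist under perturbation, and this one is forced by the count above. The missing idea is to locate where retrograde edges terminate and to show that each termination is a Trivial Insertion.

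In the basic case, the connection $h_-\to h_+$ runs into the birth--death curve $B$ of a saddle--node pair containing $h_+$ (or $h_-$). A small loop around the vertex meets the retrograde edge once and $B$ twice. Crossing $B$ does not change the dividing set, so the dividing sets just before and just after the retrograde crossing are isotopic. The recorded bypass is therefore trivial, with the node that cancels $h_\pm$ lying in its trivialising bigon. This is the same picture as the Elimination Lemma argument in the proof of Lemma~\ref{Injectivity 2}.

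The other interactions also need real analysis:
\begin{itemize}
\item When a retrograde connection coincides with a same-sign saddle connection through one of its saddles, a nearby retrograde edge is created or killed, and the way the bypass arc reaches $\Gamma$ changes.
\item When two connections share a saddle, the arcs must first be isotoped apart before Far Commutation applies.
\end{itemize}

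Finally, your first step is not a transversality statement. On surfaces of positive genus, non-trivial recurrence (for example irrational rotation on tori) survives small perturbations on positive-measure sets of parameters. The Global Reconstruction Lemma gives uniqueness, not freedom to change foliations. Obtaining ``Morse--Smale off a graph'' therefore requires a parametric version of the contact-geometric constructions behind Theorem~\ref{thm:GNF}, including the removal of degenerate closed orbits.
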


In this paper, we will extend this to show that this theorem in-fact is true in the following stronger form:

\begin{theorem}(Reconstruction Lemma for EBS) \label{thm:main}

    Let $\mathcal{F}_0$ and $\mathcal{F}_1$ be characteristic foliations on $\Sigma$ which are Morse-Smale. Let $\Gamma_{start}$ and $\Gamma_{end}$ be two dividing curves for $\mathcal{F}_0$ and, respectively, $\mathcal{F}_1$. Then there is a $1:1$-correspondence between the following two sets:
    \begin{enumerate}
        \item Contact isotopy classes of contact structures in Giroux normal form  which print $\mathcal{F}_0$ and $\mathcal{F}_1$ onto $\Sigma_0$ and $\Sigma_1$;
        \item Equivalence classes of EBS with $\Gamma_0(0)=\Gamma_{start}$ and $\Gamma_n(1)=\Gamma_{end}$
    \end{enumerate}
\end{theorem}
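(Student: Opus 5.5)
We construct a map $\Phi$ from (1) to (2) and then show it is well defined, surjective and injective. Given $\xi$ in Giroux normal form printing $\mathcal{F}_0,\mathcal{F}_1$, the retrograde times $t_1<\dots<t_n$ and Giroux's Crossing Lemma (Figure \ref{fig:Crossing-Lemma}) produce bypass arcs $c_i$. On the convex intervals between them, the dividing sets form smooth families $\Gamma_i(r)$. Together these give an EBS admitted by $\xi$ in the sense of Definition \ref{def:bseq}, with $\Gamma_0(0)=\Gamma_{start}$ and $\Gamma_n(1)=\Gamma_{end}$.

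Different choices made in reading off this EBS (dividing curves within their isotopy class, the parametrisations, $\epsilon$) change the sequence only by an isotopy of EBS. If $\xi$ and $\xi'$ are contact isotopic rel $\Sigma\times\{0,1\}$, Theorem \ref{thm:mainLSV} shows their EBS are equivalent. So $\Phi$ is well defined on isotopy classes. It remains to prove surjectivity and injectivity.

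\emph{Surjectivity (realization).} Given an EBS, we build $\xi$ slab by slab. On each interval $[t_{i-1}+\epsilon,t_i-\epsilon]$ we pick a Morse–Smale family $\mathcal{F}_t$ divided by $\Gamma_{i-1}(r)$, matching $\mathcal{F}_0,\mathcal{F}_1$ at the two ends. The Realization part of Theorem \ref{Convex Flexibility} then gives a contact structure on that slab.

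Near $t_i$ we need a local model. Choose a foliation divided by $\Gamma_{i-1}(1)$ in which the bypass arc $c_i$ is realised as the configuration of Figure \ref{fig:Crossing-Lemma}: two saddles joined across the three intersection points of $c_i$ with the dividing curve. Inside $D_i$ we let the retrograde saddle connection appear and then break, and outside $D_i$ we keep the foliation constant. The resulting movie is generic, and the Global Reconstruction Lemma applies once one contact structure realising it exists. That contact structure we write down explicitly in $D_i\times[t_i-\epsilon,t_i+\epsilon]$, for instance by a standard $t$-dependent contact form $dt+\beta_t$ with $\beta_t$ interpolating. Checking from the Crossing Lemma that the dividing set after the crossing is exactly the bypass-modified $\Gamma_i(0)$ identifies the local model, and the slabs glue along convex boundaries with matching foliations.

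\emph{Injectivity.} This is the main obstacle. Suppose $\Phi(\xi)$ and $\Phi(\xi')$ are equivalent EBS. We show that each elementary move is realised by a contact isotopy rel boundary.

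\begin{enumerate}
\item \emph{Isotopy.} A $1$-parameter family of EBS gives a family of the constructions above. The connectedness statement of Theorem \ref{Convex Flexibility} on the convex slabs, together with the local model near the retrograde times, yields a path of contact structures.
\item \emph{Far commutation.} Since $D_i\cap D_{i+1}=\emptyset$ and the intermediate dividing set is constant, the two retrograde events are supported in disjoint regions. We can therefore slide the times $t_i,t_{i+1}$ past one another. At the moment they coincide the movie is non-generic, but it is still realised, because the supports are disjoint and the construction is local.
\item \emph{Trivial insertion.} This is the delicate move. We must show that a retrograde connection along a trivial bypass arc can be cancelled, together with the isotopy $\Gamma_T$, through contact structures. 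We would try to exhibit an explicit $1$-parameter family in which the retrograde event is born from a birth–death of a node–saddle pair inside the trivialising bigon. Here we would use the Giroux flexibility of convex surfaces, which says that the trivial bypass does not change the dividing set up to isotopy, to deform the movie through non-generic ones and remove the event.
\end{enumerate}

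Finally, two contact structures with literally the same EBS, built possibly with different foliation choices, are isotopic: on the convex slabs this follows from the uniqueness part of Theorem \ref{Convex Flexibility}, and at the crossings from the uniqueness of the local model given by the Local Reconstruction Lemma. Chaining these isotopies through the sequence of moves proves that $\Phi$ is injective.
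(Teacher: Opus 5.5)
Your architecture matches the paper's: well-definedness from Theorem \ref{thm:mainLSV}, realization for surjectivity, and injectivity split into ``same EBS $\Rightarrow$ isotopic'' plus ``each elementary move is realized by isotopic structures''.

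\textbf{The main gap.} The step ``same EBS $\Rightarrow$ isotopic'' carries the real content, and you only assert it. You say the crossing slabs follow ``from the uniqueness of the local model given by the Local Reconstruction Lemma.'' That lemma only compares contact structures printing the \emph{same} characteristic foliation on $\Sigma_{t_i}$. An EBS fixes only the dividing curves and the bypass arc up to isotopy. Two structures in Giroux normal form admitting it may have quite different foliations on $\Sigma_{t_i}$: different singularities, different saddles joined by the retrograde orbit, different pictures inside $D_i$. Since $\Sigma_{t_i}$ is not convex, Giroux flexibility cannot make them coincide.

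Restricting to structures ``built'' by your construction does not help. Injectivity concerns arbitrary $\xi$ in normal form, and even your constructed structures use different foliations at $t_i$. Your Isotopy move silently relies on the same local uniqueness.

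The missing ingredient is the paper's Lemma \ref{Analytical flexibility}:
\begin{enumerate}
\item Make the foliations at $t_i\pm\epsilon$ agree using the Interpolation Lemma on nearby convex levels.
\item Isotope inside $D_i\times[t_i-\epsilon,t_i+\epsilon]$ so the two retrograde orbits share a point near which $\xi$ and $\xi'$ agree.
\item Use Giroux's creation lemma to insert saddle--node pairs along the retrograde orbit and the adjacent separatrices. This gives $\xi=\xi'$ on some $D'\times[t_i-\epsilon',t_i+\epsilon']$, while every level except $t_i$ stays convex and $t_i$ keeps a single retrograde connection.
\item Only then apply the Realization and Uniqueness Lemma on $\Sigma\setminus D'$ and on the convex slabs to finish.
\end{enumerate}

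\textbf{Trivial Insertion.} As written this is not yet an argument: convex flexibility cannot deform the movie through the non-convex level. The paper's mechanism is as follows. Choose the foliation, via item (4) of Lemma \ref{enough foliations}, so that the trivialising bigon contains exactly one node. This node forms a cancelling pair with the saddle on $\partial D$. Giroux's elimination lemma removes that pair for times near $t_i$. Afterwards no saddle of opposite sign remains to form a retrograde connection, so all levels are convex.

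\textbf{The local model.} Your suggested form $dt+\beta_t$ cannot work. At any zero of $\beta_t$ the plane $\xi$ equals $T\Sigma_t$ co-oriented by $dt$. Hence every singularity on every level has the same sign, and no retrograde connection can occur. The local model should instead come from the Local Reconstruction Lemma applied to a foliation containing the standard retrograde disk of Figure \ref{fig:Standard}, as the paper does.
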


\begin{proof}
    Theorem \ref{thm:mainLSV} establishes that there is a well-defined map from $(1)$ to $(2)$. Lemma \ref{Surjectivity} shows that this map is surjective.
    
    Lemma \ref{Injectivity 1} shows that if two contact structures induce the same EBS then they are contact isotopic. Lemma \ref{Injectivity 2} shows that if $\mathcal{EB}$ and $\mathcal{EB}'$ are related by Isotopy, Far Commutation or Trivial Insertion then there are contact structures $\xi$ and $\xi'$ realising $\mathcal{EB}$ and $\mathcal{EB'}$ which are contact isotopic. These two arguments imply that the map is injective which thus completes the proof.
\end{proof}

\emph{Acknowledgments:} The author would like to thank John Etnyre and Vera Vértesi for helpful conversations on this topic. The author is supported by the Swedish Research Council, VR 2022-06593, Centre of Excellence in Geometry and Physics at Uppsala University and VR 2024-04417, project grant.

\section{Applications} \label{Sec:Applications}

In this section, we will explore two applications of Theorem \ref{thm:main}. Eliashberg's theorem on the uniqueness of tight contact structures on the $3$-ball and a folklore theorem on the equivalence of bypasses to other constructions.

\subsection{Eliashberg's Theorem} 

In this section, we show how to obtain Eliashberg's theorem from Theorem \ref{thm:main}:

\begin{theorem} (\cite[Theorem 2.1.3.]{eliashberg1992contact}) \label{Eliashbergs theorem}
    Let $\mathcal{F}$ be a characteristic foliation on the sphere $S^2$ which is divided by a connected dividing curve. Then there is a unique isotopy class of tight contact structures that print $\mathcal{F}$ onto the boundary.
\end{theorem}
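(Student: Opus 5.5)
We derive Eliashberg's theorem from Theorem \ref{thm:main} by reducing to $S^2\times[0,1]$. Existence is standard: the restriction of the standard tight structure on $\R^3$ to a suitable ball realises any $\mathcal{F}$ divided by a connected curve, by Theorem \ref{Convex Flexibility} together with Giroux's tightness criterion. The substance is uniqueness.

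\textbf{Reduction to a product.} Let $\xi,\xi'$ be tight on $B^3$, both printing $\mathcal{F}$ on $\partial B^3$. First, after a contact isotopy fixing a neighbourhood of the boundary, we arrange that both agree on a small Darboux ball $B_\delta$ around an interior point. Its boundary $\partial B_\delta$ is a convex sphere with connected dividing curve, and we may take its characteristic foliation $\mathcal{F}_0$ to be Morse--Smale, with the standard two elliptic points. After a $C^\infty$-small perturbation of $\mathcal{F}$ within its class of convex foliations, we may also assume $\mathcal{F}_1:=\mathcal{F}$ is Morse--Smale. The complement $B^3\setminus B_\delta$ is then $S^2\times[0,1]$ with convex boundary. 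By Theorem \ref{thm:GNF}, both restrictions are contact isotopic rel boundary to structures in Giroux normal form. By Theorem \ref{thm:main}, it suffices to show that any two EBS on $(S^2,\Gamma_{start})$ to $(S^2,\Gamma_{end})$ realised by tight structures are equivalent, where both dividing sets are connected.

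\textbf{Combinatorics of tight EBS on the sphere.} The key claim is that in a tight structure every bypass arc $c_i$ appearing in an admitted EBS is trivial in the sense of Definition \ref{def:ebseqequiv}. Suppose instead that at some stage $\Gamma_{i-1}(1)$ is connected and $c_i$ is nontrivial. On $S^2$ with a connected dividing curve, a bypass along a nontrivial arc produces a dividing set with at least two components, or, in the case of the arc that "goes around" the other way, a closed homotopically trivial component. Either way, Giroux's criterion then shows that $\Sigma_t$ just after $t_i$ has a neighbourhood that is overtwisted when the sphere has more than one dividing curve. Hence, by induction along the sequence, all $\Gamma_i(r)$ are connected and all $c_i$ are trivial. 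The inductive step uses that tightness makes every convex $S^2_t$ have connected dividing set, so the preceding argument applies at each stage. Trivial Insertion in reverse then deletes every $c_i$, leaving an EBS with no bypasses: a single isotopy $\Gamma_0(r)$ from $\Gamma_{start}$ to $\Gamma_{end}$ through connected curves on $S^2$.

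\textbf{Conclusion and main obstacle.} Any two isotopies of a connected curve on $S^2$ rel endpoints are homotopic through isotopies, since the space of embedded circles in $S^2$ bounding a disc with a marked side is simply connected enough (it is homotopy equivalent to $SO(3)/SO(2)\times$ contractible $\simeq S^2$, which is simply connected). The Isotopy move therefore identifies the two EBS. Theorem \ref{thm:main} then gives a contact isotopy rel boundary on $S^2\times[0,1]$, which glues with the identity on $B_\delta$. I expect the main obstacle to be the tightness step: justifying rigorously that a nontrivial bypass arc on a connected dividing curve of $S^2$ forces disconnected dividing curves, and that this yields an overtwisted disc inside the given structure. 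Here I would apply Giroux's criterion to the convex level $S^2_{t_i+\epsilon}$ directly. A secondary obstacle is the perturbation of the boundary foliation to be Morse--Smale, which I would handle using the flexibility statement of Theorem \ref{Convex Flexibility} in a collar.
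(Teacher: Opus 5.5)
Your proposal is correct and follows essentially the same route as the paper: you excise a common Darboux ball to reduce to $S^2\times[0,1]$, use Giroux's criterion to force every dividing set to be connected and hence every bypass arc to be trivial, delete these by Trivial Insertion, and conclude via simple connectivity of the space of co-oriented circles in $S^2$ (the paper's great-circle argument) together with Theorem \ref{thm:main}. Your extra steps supply hypotheses of Theorem \ref{thm:main} that the paper's proof leaves implicit: normalising both structures via Theorem \ref{thm:GNF}, and arranging Morse--Smale boundary foliations in a collar.
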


\begin{lemma} \label{Unique tight spheres}
    There is a unique equivalence class of EBS that represent a tight contact structure on $S^2 \times [0,1]$.
\end{lemma}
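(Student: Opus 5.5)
Since $\Sigma = S^2$ and the dividing sets $\Gamma_{start}$, $\Gamma_{end}$ must be connected (Giroux's criterion forces this for any tight structure near a convex sphere), I will show that every EBS on $(S^2,\Gamma)$ representing a tight contact structure is equivalent to the trivial EBS $(\Gamma_0(r))$ with $n=0$. The trivial EBS consists only of an isotopy of dividing curves from $\Gamma_{start}$ to $\Gamma_{end}$. Any two such isotopies of a single simple closed curve on $S^2$ are related by an isotopy of isotopies, because the relevant space of embedded circles has trivial enough topology. So the trivial sequences form a single equivalence class, and it is realised by the $[0,1]$-invariant tight neighbourhood of a convex sphere. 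It remains to reduce an arbitrary tight EBS to this class.

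\textbf{Step 1: every intermediate dividing set is connected.} I argue by induction along the sequence $\Gamma_0(r), c_1, \Gamma_1(r),\dots$. Take the contact structure realising the EBS, given by Lemma~\ref{Surjectivity} and unique up to isotopy by Theorem~\ref{thm:main}. Each slice $\Sigma_t$ with $t\neq t_i$ is a convex sphere in a tight manifold, so Giroux's criterion forces its dividing set to be connected. Hence every $\Gamma_i(r)$ is a single circle on $S^2$.

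\textbf{Step 2: every bypass is trivial.} A bypass arc $c_i$ meets the connected curve $\Gamma_{i-1}(1)$ in three points. On a sphere, the bypass along $c_i$ either yields a disconnected dividing set or is a trivial bypass, meaning it is isotopic to the identity modulo the trivialising bigon. Step 1 rules out the first case, so every $c_i$ is a trivial bypass arc. I will verify this by enumerating the configurations of an arc crossing a single circle three times on $S^2$. There are essentially two such configurations, according to which side the arc's endpoints approach from, and the result of the bypass can be read off directly in each.

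\textbf{Step 3: remove the bypasses.} Using Trivial Insertion in reverse, I delete each $c_i$ together with the isotopy $\Gamma_T$ supported near its bigon, absorbing it into the neighbouring isotopies $\Gamma_{i-1}(r)$ and $\Gamma_i(r)$. This requires checking that the surrounding isotopies can first be put into the form the move demands, namely supported near $c_T$ and its bigon, with the endpoint matching. I will arrange this using the Isotopy move, again relying on the flexibility of isotopies of a circle on $S^2$. After finitely many deletions the EBS is trivial.

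\textbf{Main obstacle.} I expect Step 2 to be the main difficulty. Some bypass arcs crossing a single circle three times on $S^2$ are not trivial in the strict sense used by Trivial Insertion, even though they produce an isotopic curve. Such arcs include those winding around, or whose bigon contains, other features. These may in fact be overtwisted bypasses, which are excluded by tightness. Distinguishing the two will need an extra tightness argument, for instance that a bypass producing a homotopically trivial dividing component violates Giroux's criterion. If the case analysis becomes unwieldy, I will fall back on Theorem~\ref{thm:mainLSV}: I would compare with a reference tight structure realising the trivial EBS, and use the isotopy between the two structures relative to the boundary to conclude that their EBS are equivalent.
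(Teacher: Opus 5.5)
Your proposal is correct and follows the paper's route. Giroux's criterion forces every intermediate dividing set to be connected, so every bypass arc is trivial and can be removed by reverse Trivial Insertion, and the remaining bypass-free sequences are all equivalent because the space of co-oriented embedded circles on $S^2$ is simply connected. The paper justifies that last fact by straightening to great circles, whose oriented space is $S^2$; your phrase ``trivial enough topology'' glosses over the point that the co-orientation from $R_\pm$ is what removes the $\mathbb{Z}/2$ present for unoriented circles.

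Your Step~2 enumeration already settles the case analysis you were worried about. Both components of $S^2\setminus\Gamma$ are discs, so only the two mirror-image configurations occur, one giving the trivial bypass and the other a dividing set with contractible components. You should therefore drop the fallback via Theorem~\ref{thm:mainLSV}: it presupposes uniqueness of tight structures on $S^2\times[0,1]$, which is circular since this lemma is used to derive Eliashberg's theorem.
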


\begin{proof}
    Giroux's tightness criterion states that a convex sphere $S^2$ has a tight neighborhood if and only if the dividing curve is connected. Thus any admissible EBS for such a contact structure can only contain dividing curves that are connected. This is only possible if all bypass arcs are trivial. Any EBS representing a tight contact structure is thus equivalent to an EBS without any bypass arcs. By Theorem \ref{thm:main}, the only invariant of a tight contact structure is the isotopy class of the movie of dividing curves. 

    To prove that the latter class is unique, we proceed as follows: Let $\Gamma$ and $\Gamma'$ be two $1$-parameter families of oriented embeddings of $S^1$ to $S^2$ with the same endpoints $\Gamma(0)=\Gamma'(0);\Gamma(1)=\Gamma'(1)$. Pick a parametrization of $\Gamma(0)$ and $\Gamma(1)$ and extend these parametrizations arbitrarily to $\Gamma$ and $\Gamma'$. Straighten $\Gamma$ and $\Gamma'$ so that in a neighborhood $[0,\epsilon] \cup [1-\epsilon,1]$ of the boundary they agree as parametrized curves and on $[\epsilon,1-\epsilon]$, they are $1$-parameter families of geodesics. We restrict our attention to $[\epsilon,1-\epsilon]$. We recall that $TS^2 \setminus S^2 \cong \R P^3$ via the round metric. Thus the space of parametrized geodesics is $\R P^3$. However, the parametrization was a choice and not inherent to the movie of dividing curves, thus we can further use the fact that the round metric induces a fibre sequence $S^1 \rightarrow \R P^3 \rightarrow S^2$ and thus we see that the space of unparametrized geodesics is actually $S^2$. Now $\Gamma$ and $\Gamma'$ induce paths on $S^2$ with the same endpoints and thus they must be isotopic to one another. Since the map above produces paths of embedded, oriented curves we observe that the space of embedded co-oriented curves on $S^2$ has trivial $\pi_1$ from which we conclude that there is a unique EBS on $S^2$ with fixed endpoints that can be written without any bypass arcs.
\end{proof}

\begin{proof}[Proof of Theorem \ref{Eliashbergs theorem}]
    Normalize $\xi$ and $\xi'$ to the standard contact structure on a neighborhood $B^3_{Darboux}$ around $0$ and cut $B^3 = S^2 \times I \cup B^3_{Darboux}$. Now the boundary foliations of $S^2 \times I$ agree for both $\xi$ and $\xi'$ which implies that we can calculate their EBS. By Lemma \ref{Unique tight spheres} they must be equivalent and thus by Theorem \ref{thm:main} they are isotopic.
\end{proof}

Similarly, one reobtains the following theorem by instead normalizing the contact structures around $2$ points.

\begin{theorem}(\cite[Theorem 2.1.1.]{eliashberg1992contact})
    There is a unique tight contact structure on $S^3$.
\end{theorem}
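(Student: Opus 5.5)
The plan is to deduce uniqueness of the tight contact structure on $S^3$ from Theorem \ref{Eliashbergs theorem} and Lemma \ref{Unique tight spheres}, by decomposing $S^3$ as a product region between two Darboux balls. Let $\xi$ and $\xi'$ be tight contact structures on $S^3$. I pick points $p,q\in S^3$ and, using Darboux's theorem together with the transitivity of the contact diffeomorphism group (isotopic to the identity), isotope $\xi'$ so that it agrees with $\xi$ on small standard Darboux balls $B_p$ and $B_q$ around $p$ and $q$. Shrinking these balls if necessary, their boundary spheres are convex. Each induced characteristic foliation is the standard Morse--Smale foliation with two nodes, divided by a single connected curve. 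The complement $S^3\setminus(B_p\cup B_q)$ is diffeomorphic to $S^2\times[0,1]$, with $\partial B_p=\Sigma_0$ and $\partial B_q=\Sigma_1$, and both $\xi$ and $\xi'$ print the same foliations $\mathcal{F}_0,\mathcal{F}_1$ there.

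Next, I apply Theorem \ref{thm:GNF} to put both restrictions in Giroux normal form by contact isotopies relative to the boundary. This is legitimate because the boundary spheres are convex with Morse--Smale foliations. Each restriction is tight, since it is a subset of a tight manifold, so it admits an EBS. Every sphere $\Sigma_t$ away from the $t_i$ is a convex sphere with a tight neighbourhood, so Giroux's criterion forces all its dividing curves to be connected. By Lemma \ref{Unique tight spheres}, the EBS of $\xi$ and of $\xi'$ are therefore equivalent, both having $\Gamma_{start}$ and $\Gamma_{end}$ equal to the given connected curves. Theorem \ref{thm:main} then shows that $\xi|_{S^2\times I}$ and $\xi'|_{S^2\times I}$ are contact isotopic relative to the boundary. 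Extending this isotopy by the identity on $B_p\cup B_q$ gives an isotopy from $\xi$ to $\xi'$ on all of $S^3$.

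To get existence, and to pin down that the class is the standard one, I note that $\xi_{std}$ is tight, so the unique class is nonempty. Alternatively, one can realize the bypass-free EBS via the surjectivity in Theorem \ref{thm:main} and glue in the two Darboux balls.

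I expect the main obstacle to be the normalization step: making $\xi$ and $\xi'$ agree near two points, with convex boundary spheres carrying identical Morse--Smale foliations, while keeping the isotopies relative to the boundary compatible with the hypotheses of Theorem \ref{thm:GNF} and Theorem \ref{thm:main}. I would handle this with the standard argument that any two Darboux charts are isotopic, applied to a pair of charts at once. This works because $S^3$ minus points is connected, so the two charts can be moved independently. I would take radial spheres in the standard model, which are convex with the required foliation. A secondary point is that Lemma \ref{Unique tight spheres} implicitly uses the fact that the intermediate convex spheres $\Sigma_t$ inherit tightness. This is immediate because they sit inside the tight manifold $(S^3,\xi)$.
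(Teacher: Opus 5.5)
Your proposal is correct and follows the paper's route: the paper's one-line proof says to normalize $\xi$ and $\xi'$ around two points, cut out the Darboux balls to get $S^2\times[0,1]$, and conclude from Lemma \ref{Unique tight spheres} and Theorem \ref{thm:main}, exactly as in the proof of Theorem \ref{Eliashbergs theorem}. One small caveat is that your ``alternative'' existence argument (realizing the bypass-free EBS and gluing in two Darboux balls) does not by itself show the resulting structure is tight, so existence should rest on the tightness of $\xi_{std}$ (Bennequin), as in your main argument.
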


\subsection{Equivalence of bypasses, retrograde connections and handle attachments} \label{Sec:Application 2}

An often-used folklore theorem which was first proven in the authors' master thesis \cite{Scharitzer}, states that one can equivalently state  Theorem \ref{thm:GNF}, in terms of bypasses. Namely, that one can isotopy $\xi$ on $\Sigma \times [0,1]$ so that there are finitely many $t_1,\dots,t_n$ and a small $\epsilon$ so that $\Sigma_t$ for $|t-t_i| \geq \epsilon$ is convex and there is a bypass relating $\Sigma_{t_i-\epsilon}$ and $\Sigma_{t_i+\epsilon}$. There is also a (not proven in that work) version where these are instead related by attaching (smoothly but not geometrically cancelling) $1$ and $2$-handles where the handles are given specific local models. 

The geometric set-up for each of these $3$ different constructions (retrograde connections, bypass attachment, handle attachment) can be chosen so that they produce contact structures $\xi$ on $\Sigma \times [0,1]$ so that $\Sigma$ splits into a disk $D$ and its complement $\Sigma'$ which fulfill the following axiomatics:

\begin{enumerate}
    \item $\mathcal{F}_t$ restricted to either $\Sigma'$ or $D$ fulfills the boundary conditions layed out in the beginning;
    \item The contact structure $\xi|_{\Sigma' \times [0,1]}$ is $t$-independent.
    \item $\xi|_{D \times [0,1]}$ is tight and prints a characteristic foliation onto $D_0$ and $D_1$ divided by the dividing curves indicated on the left and middle of Figure \ref{fig:Twisting-Disks}.
\end{enumerate}

\begin{figure}
    \centering
    \includegraphics[width=1.25\linewidth,trim={5cm 1cm 0 1cm} ]{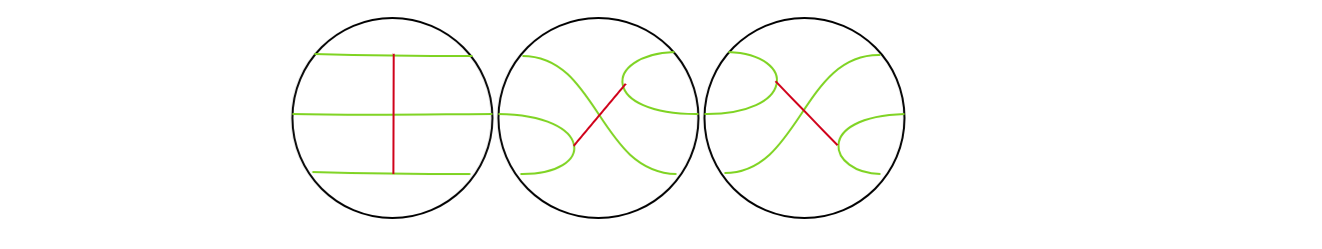}
    \caption{From left to right the three different dividing curves with $3$ components on a disk excluding circular components. Indicated are always the (up to isotopy) unique bypass arcs which are neither trivial nor produce circular components of the dividing curve. Attaching any of them yields the dividing curve to the right.}
    \label{fig:Twisting-Disks}
\end{figure}

The following lemma thus proves that any construction fulfilling the above axiomatics produces the same contact structure and thus one can freely switch between models as desired:

\begin{lemma}
    Let $\Gamma_{start} \subset D^2$ be the multicurve indicated on the left of Figure \ref{fig:Twisting-Disks}. Then there are exactly $2$ equivalence classes of EBS that extend $\Gamma_{start}$ so they represent a tight contact structure: The empty EBS and the EBS which contains exactly one non-trivial bypass arc.
\end{lemma}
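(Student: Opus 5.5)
We argue along the lines of Lemma \ref{Unique tight spheres}: first constrain the dividing sets that can occur in an admissible EBS for a tight structure, then normalise the bypass arcs, and finally identify the resulting movies of dividing curves up to isotopy.

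\textbf{Step 1: admissible dividing sets.} Take an EBS $\mathcal{EB}$ starting at $\Gamma_{start}$, the three-arc multicurve on the left of Figure \ref{fig:Twisting-Disks}, and realising a tight contact structure on $D\times[0,1]$. Every intermediate multicurve $\Gamma_i(r)$ then divides a convex disk with Legendrian-type boundary sitting inside a tight manifold. By Giroux's tightness criterion, used as in the proof of Lemma \ref{Unique tight spheres}, no $\Gamma_i(r)$ has a closed homotopically trivial component. The boundary conditions fix the $6$ endpoints on $\partial D$, so each $\Gamma_i(r)$ is one of the three crossingless matchings of $6$ boundary points with $3$ arcs. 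These are exactly the three pictures of Figure \ref{fig:Twisting-Disks}.

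\textbf{Step 2: classify the bypass arcs.} Any bypass $c_i$ must map an admissible configuration to an admissible one. By the caption of Figure \ref{fig:Twisting-Disks}, up to isotopy the only arcs that are neither trivial nor create circles are the distinguished ones, and each moves a configuration one step to the right. Trivial bypasses can be removed by Trivial Insertion together with Isotopy. We may therefore assume that every $c_i$ is non-trivial, so the EBS is a word in these elementary moves, with a Dehn-twist-like cyclic action on the three configurations.

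\textbf{Step 3: bound the number of moves.} We must show that a tight structure admits at most one non-trivial bypass. Attaching two consecutive non-trivial bypasses yields the third configuration of Figure \ref{fig:Twisting-Disks}. Its appearance as a dividing set on $D_1$, relative to $D_0$, implies a nontrivial relative twisting. Concretely, we plan to realise the two bypasses by the local model and exhibit an overtwisted disk, namely a Legendrian unknot with $tb=0$ built from the two bypass half-disks glued along the region swept by the first bypass arc. An alternative route is to cap $D\times[0,1]$ by standard tight pieces to get $S^2\times[0,1]$ and use Giroux's criterion on a sphere whose dividing set acquires two components.

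This step is the main obstacle. Theorem \ref{thm:main} only turns an EBS into an isotopy class; it does not by itself detect overtwistedness. So an explicit geometric argument that two stacked nontrivial bypasses on a disk produce an overtwisted disk is unavoidable.

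\textbf{Step 4: identify the survivors.} The remaining EBS have $n=0$ or $n=1$. For $n=0$ we argue as in Lemma \ref{Unique tight spheres}: loops of admissible multicurves on the disk rel boundary are contractible, so any two bypass-free EBS are equivalent by Isotopy. For $n=1$ the non-trivial arc is unique up to isotopy by Step 2, so again all such EBS are equivalent. The empty EBS and the one-bypass EBS end at different $\Gamma_{end}$, so they are not equivalent. Moreover both are realised tightly, by the product structure and by a single bypass attachment respectively. Theorem \ref{thm:main} then yields exactly two classes.
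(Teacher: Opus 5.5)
\textbf{The gap is Step 3.} Your Steps 1, 2 and 4 essentially follow the paper. The paper also sorts bypass arcs into trivial, circle-creating, and the distinguished non-trivial one; it discards the trivial ones and uses contractibility of arcs in a disk rel endpoints. Step 3, however, is the actual content of the lemma, and you only announce it.

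\emph{Neither route is carried out.} The capping route also fails as stated. An overtwisted disk in the capped $S^2\times[0,1]$ may lie in the cap, so it says nothing about $D\times[0,1]$. For example, cap with the middle picture of Figure \ref{fig:Twisting-Disks} as the constant dividing set on the other hemisphere. Then a single bypass, which is tight, already ends on a sphere with three dividing circles.

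\emph{Your claim that an explicit overtwisted disk is ``unavoidable'' is mistaken, and this is the missing idea.} Tightness is invariant under isotopy. By Theorem \ref{thm:main} (surjectivity plus injectivity), every EBS equivalent to one admitted by $\xi$ is admitted by a structure isotopic to $\xi$. So it suffices to exhibit an \emph{equivalent} EBS that passes through a dividing set with a homotopically trivial closed component. Giroux's criterion on that convex slice then makes $\xi$ overtwisted.

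\emph{How the paper gets this: Far Commutation.} The non-trivial arc $c_2$ on the middle configuration can be isotoped off the disk $D_1$ of $c_1$, by routing it around $D_1$ near $\partial D$. It is then also an arc on $\Gamma_{start}$, so the two bypasses commute. Viewed on $\Gamma_{start}$, one half of $c_2$ cuts off a bigon. Attaching $c_2$ first, with the same handedness as $c_1$, closes the corresponding sub-arc of $\Gamma_{start}$ into a contractible circle (Figure \ref{fig:OT}). Attaching $c_1$ afterwards still lands on the third configuration, so this is a legitimate equivalent EBS, and hence $n\le 1$.

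\emph{A minor correction to Step 1.} Six points on a circle have five crossingless matchings, not three. The two ``all caps'' matchings are excluded by your Step 2 classification: from $\Gamma_{start}$ they cannot be reached without creating a closed component. The count in Step 1 does not exclude them.
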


\begin{proof}
    Any bypass arc drawable on any of the dividing curves  in Figure \ref{fig:Twisting-Disks} are of any of the following $3$ types: Trivial, it involves all $3$ different components of the dividing curve or it produces a contractible component of the dividing curve. Since we are interested in tight contact structures, we know there are none of the latter. Similarly, we can remove all trivial ones. This leaves some finite sequence of non-trivial bypass arcs where the dividing curves cyclically go through the $3$ dividing curves in Figure \ref{fig:Twisting-Disks}. All of those with the same number of bypass arcs are furthermore isotopic because the isotopy class of a properly embedded curve on a disk with fixed boundary points is contractible. 
    
    Thus, we only need to show that the number of bypass arcs cannot exceed $1$. Assume otherwise, then we can focus on the first $2$. As shown in Figure \ref{fig:OT} these can be commuted past one another which results in an dividing curve with a contractible component. Thus the number of bypass arcs is either $0$ or $1$ and thus we are done.

    \begin{figure}
    \centering
    \includegraphics[width=1.25\linewidth,trim={5cm 1cm 0 1cm} ]{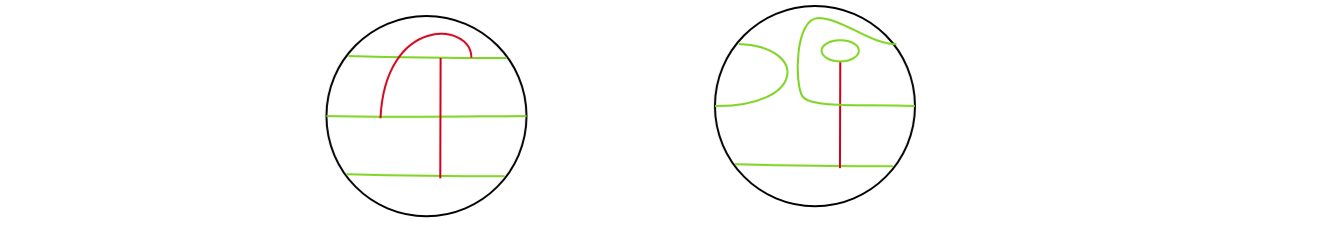}
    \caption{On the left the bypass arc in the middle of Figure \ref{fig:Twisting-Disks} commuted past the bypass arc from the left of Figure \ref{fig:Twisting-Disks}. On the right, the resulting dividing curve of attaching the other bypass arc first. Note the contractible component of the dividing curve.}
    \label{fig:OT}
\end{figure}
\end{proof}

\section{Constructions}

\begin{lemma} \label{enough foliations}

    Let $\Gamma$ be a dividing curve on $\Sigma$ and $c_1,\dots,c_k$ a set of pairwise disjoint bypass arcs on $\Gamma$ where $k$ is any non-negative integer. Then there is a characteristic foliation $\mathcal{F}$ s.t. there are disks $D_1,\dots,D_k$ on $\Sigma$ that fulfill the following:

    \begin{enumerate}
        \item $c_i \subset D_i$;
        \item $\mathcal{F}|_{D_i}$ is as in Figure \ref{fig:Standard};
        \item $\mathcal{F}|_{\Sigma \setminus \bigcup D_i}$ is divided by $\Gamma|_{\Sigma\setminus \bigcup c_i}$.
        \item If a region of the complement of $\Gamma \cup \bigcup c_i$ is a bigon, then $\mathcal{F}$ contains exactly one node.
    \end{enumerate}
\end{lemma}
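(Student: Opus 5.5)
We build $\mathcal{F}$ by hand: first a standard local model on each disk $D_i$, then a Morse--Smale foliation on the complement divided by the remaining pieces of $\Gamma$, and finally a gluing along $\partial D_i$.

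\emph{Step 1: the disks.} Since the $c_i$ are pairwise disjoint compact arcs, we choose pairwise disjoint closed disk neighbourhoods $D_i$ of $c_i$, each thin enough to meet $\Gamma$ in exactly three parallel arcs crossing $c_i$. This fixes the pattern on $\partial D_i$: six points of $\Gamma\cap\partial D_i$ and six boundary arcs, alternately labelled positive and negative. On each $D_i$ we place the model foliation of Figure \ref{fig:Standard}. We then check that along $\partial D_i$ it is transverse to the boundary, or Morse--Smale and parallel on a collar annulus, in the sense of the boundary conventions of the introduction. We also check that its sign pattern on the collar matches the sign pattern of $\Gamma$ near $\partial D_i$. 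This gives conditions (1) and (2).

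\emph{Step 2: the complement.} Set $\Sigma'=\Sigma\setminus\bigcup \mathrm{int}\, D_i$ and $\Gamma'=\Gamma\cap\Sigma'$, a properly embedded multicurve that splits $\Sigma'$ into $R_\pm$. On $\Sigma'$ we take the standard Giroux construction: choose a function $f$ on $\Sigma'$ with $\Gamma'=f^{-1}(0)$, positive on $R_+$ and negative on $R_-$. Take a Morse function on each component of $R_\pm$, with gradient-like vector field expanding transversely out of $R_+$ across $\Gamma'$ and into $R_-$. Then glue the pieces into a vector field $X$ on $\Sigma'$ divided by $\Gamma'$. We adjust $X$ near $\partial D_i$ to agree with the collar of the model in Step 1. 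This works because the model is itself divided by its own dividing arcs, so on the collar annulus both fields are divided by the same multicurve, and the space of such fields is convex. We then perturb to Morse--Smale without disturbing the dividing property, using Theorem \ref{Convex Flexibility} and the openness of the dividing condition. This gives condition (3).

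\emph{Step 3: bigons.} For condition (4), consider a bigon component $B$ of $\Sigma\setminus(\Gamma\cup\bigcup c_i)$. Its boundary consists of one arc of $\Gamma$ and one arc of some $c_i$, so the part of $B$ outside $D_i$ is a disk in $R_+$ or $R_-$. We choose the Morse function on this disk with a single critical point, a maximum for $R_+$ and a minimum for $R_-$. This gives one node; its index is forced by Poincaré--Hopf on a disk with outward (resp. inward) boundary behaviour. We must also check that the model in Figure \ref{fig:Standard} contributes no extra node to the part of the bigon inside $D_i$. If it does, we let that node be the unique one and take the outside part singularity-free, which is possible since it is then a half-disk with mixed boundary behaviour.

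\emph{Main obstacle.} The delicate point is the interface between Steps 1 and 3. We must verify that the fixed local model in $D_i$ meets each adjacent bigon in a way compatible with exactly one node, and that after gluing no region is left that needs extra singularities by an Euler-characteristic count. We would settle this by a direct index count: on each region $R$ of the complement, the sum of node indices minus saddle indices must equal $\chi(R)$ with the boundary behaviour prescribed by the model.
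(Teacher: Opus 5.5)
Your outline (model foliation on each $D_i$, a divided foliation on the complement, a node count on bigons) has the right shape. The gap is that the step carrying the content of the lemma is not done, and the gluing mechanism you propose for it does not work.

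\textbf{The gluing in Step 2.} You build a Giroux-type field on $\Sigma'$ independently and then glue it to the model on a collar of $\partial D_i$ by appealing to convexity of the space of divided fields. That convexity holds only pointwise. For a cutoff $\rho$ one has $\mathrm{div}_\omega(\rho X_1+(1-\rho)X_2)=\rho\,\mathrm{div}_\omega X_1+(1-\rho)\,\mathrm{div}_\omega X_2+d\rho(X_1-X_2)$, and the last term has no sign. The interpolated field can also acquire new zeros wherever $X_1$ and $X_2$ point in opposite directions. Being divided by the same multicurve does not rule this out, because the direction of flow along $\partial D_i$ inside $R_\pm$ is not determined by $\Gamma$. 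Also, the model is not divided on $D_i$ at all, since a retrograde orbit would have to cross $\Gamma$ from $R_-$ to $R_+$; at best a collar is. So the complement foliation must be built from the start with the model's boundary pattern along $\partial D_i$ as a boundary condition: which orbits enter or leave, and which nodes and saddles sit on $\partial D_i$. Your ``Main obstacle'' paragraph postpones exactly this.

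\textbf{Condition (4).} This condition hinges on that same boundary pattern. The Poincar\'e--Hopf remark in Step 3 presumes purely outward (or inward) behaviour on $\partial(B\setminus D_i)$, which fails on the arc $B\cap\partial D_i$. The case split ``if the model contributes a node'' is never resolved, so you have not shown the bigon ends up with exactly one node.

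\textbf{A misattribution.} Theorem \ref{Convex Flexibility} does not perturb vector fields to Morse--Smale; that would be Kupka--Smale/Peixoto plus openness of being divided. Morse--Smale is not part of the statement in any case.

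\textbf{How the paper does it.} The paper does the interface work directly on each component $U$ of $\Sigma\setminus(\Gamma\cup\bigcup D_i)$, taking the model's boundary orbits as given:
\begin{itemize}
\item It removes genus by placing a repelling closed orbit on the meridian of each handle, cut off from the rest of $U$ by two saddles.
\item The remaining planar $U$ is bounded by pieces of $\Gamma$, separatrices of positive saddles, and pairs of orbits from positive nodes coming from the $D_i$. All but one such node is blocked by a nearby saddle.
\item A central node is added if none remains.
\item Positive saddles are added until cutting along their unstable separatrices leaves a disk, so every remaining orbit can be routed to the single node.
\end{itemize}
For a bigon this leaves exactly one node, either the central one or the one on $\partial D_i$. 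Your Morse-function approach could replace the closed-orbit trick for handles. It would first need to be set up as a relative construction matching the model on $\partial D_i$ (no interior sinks in $R_+$, no interior sources in $R_-$), with the index count on each bigon piece carried out explicitly.
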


\begin{figure}
    \centering
    \includegraphics[width=1.25\linewidth,trim={5cm 1cm 0 1cm} ]{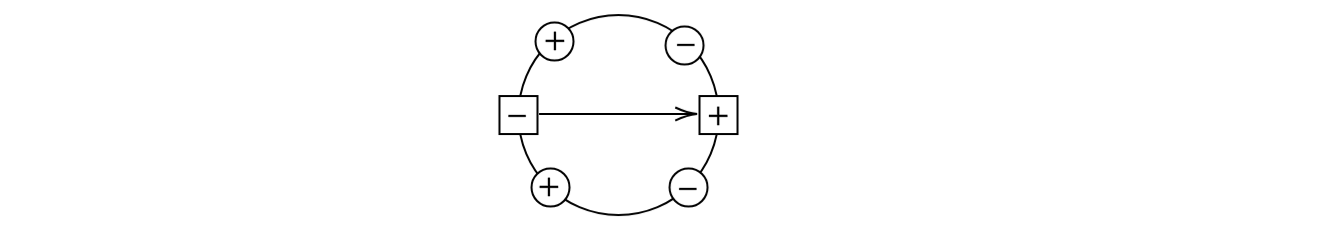}
    \caption{A local model for a standard neighborhood of a retrograde saddle-saddle connection. Note that the position of the saddles on the boundary distinguishes 2 adjacent regions to the bypass arc.}
    \label{fig:Standard}
\end{figure}

\begin{proof}
    We start by picking a disk $D_i$ around each $A_i$ which are pairwise disjoint for $i \neq j$ and endow them with the desired foliation. Now, we only need to extend this prescription onto the complement: Pick one connected component $U$ of the complement of $\Gamma \cup \bigcup D_i$. For sake of definiteness, we assume $U$ is contained in the positive component, the case for the negative component works similarly.
    
    First, we reduce to the case so that $U$ is planar: Pick a boundary component where $U$ meets $\Gamma$ and a handle of $U$. Then as in Figure \ref{fig:Handle} put a closed orbit onto the meridian of the handle of $U$ and two saddles whose separatrices separate the closed orbit from the remaining region of $U$. We apply this to every handle of $U$ and shrink $U$ by removing the regions cut-off by the unstable separatrices of the positive saddles. 
    
    So $U$ is now planar and its boundary components are either pieces of $\Gamma$ or separatrices of positive saddles or two orbits of a positive node (originating from the $D_i$). If there is more than one of the latter kind, then we put a saddle close to this node blocking it. Thus we can assume that $U$ now has either one node or none. If it has none, we place one node into the center. Now, if $U$ is contractible we are done by connecting all remaining orbits which are separatrices of positive saddles or orbits leaving $U$ along $\Gamma$ to the node. This works exactly if the complement of the unstable separatrices of the saddles inside $U$ is contractible. This is the case iff we can cut $U$ open along the unstable separatrices and the boundary becomes connected. If this is not the case then we add positive saddles until it is the case.

    \begin{figure}
    \centering
    \includegraphics[width=1.25\linewidth,trim={8cm 1cm 0 1cm} ]{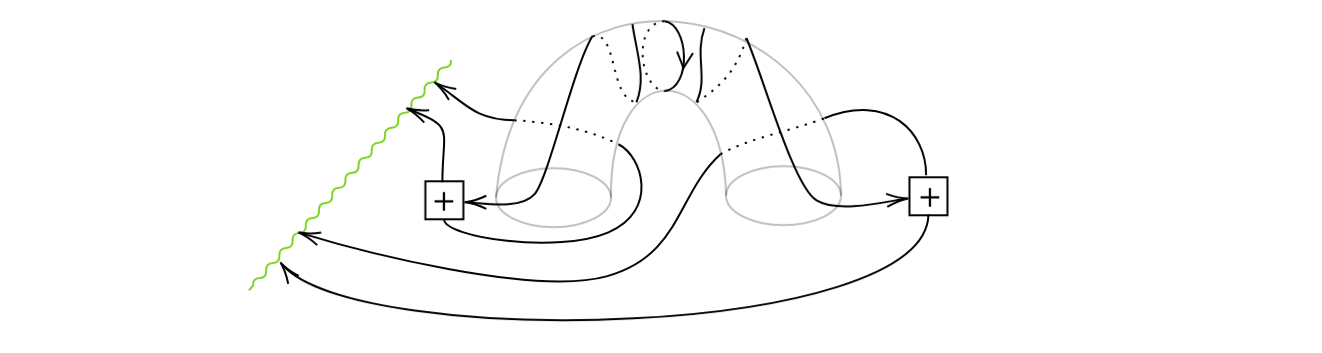}
    \caption{A characteristic foliation locally around a $1$-handle and some piece of any adjacent dividing curve. Note that the closed orbit is repellent.}
    \label{fig:Handle}
\end{figure}

    Following this construction, if $U$ was originally a bigon in $\Sigma \setminus (\Gamma \cup \bigcup c_i) $ then we either added a node into its center or onto the boundary of the disk $D_i$ thus $U$ contains exactly one node.
\end{proof}

\begin{lemma} \label{Analytical flexibility}

    Let $\xi$ and $\xi'$ be two contact structures on $\Sigma \times I$ admitting the same EBS $\mathcal{EB}$ which fulfills the following:

    \begin{enumerate}
        \item $\mathcal{EB}$ contains a single bypass arc.
        \item $\Gamma_i(r)$ is $r$-independent.
    \end{enumerate}

    Then $\xi$ and $\xi'$ are contact isotopic.
\end{lemma}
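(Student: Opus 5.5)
The strategy is to cut $\Sigma \times [0,1]$ into pieces on which the earlier flexibility results apply, and then glue the resulting isotopies. Since $\mathcal{EB}=(\Gamma_0(r),c_1,\Gamma_1(r))$ has a single bypass arc and both families are $r$-independent, $\xi$ and $\xi'$ are in Giroux normal form with exactly one retrograde time each, $t_1$ and $t_1'$. Both are convex on either side of it, divided by $\Gamma_0$ before and by $\Gamma_1$ after.

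First we normalise the time parameters. After reparametrising $t$, which is a contact isotopy relative to $\Sigma_0,\Sigma_1$, we may assume $t_1=t_1'=\tfrac12$ and that both structures use the same $\epsilon$. On $[0,\tfrac12-\epsilon]$ the characteristic foliations of $\xi$ and $\xi'$ are all divided by the constant multicurve $\Gamma_0$, and they agree at $t=0$. By Lemma \ref{enough foliations} (with $k=1$ and $c_1$) we fix a reference foliation $\mathcal{F}_-$ divided by $\Gamma_0$, with the standard model of Figure \ref{fig:Standard} near $c_1$. By the uniqueness part of Theorem \ref{Interpolation Lemma} (the Realization and Uniqueness Lemma), we contact isotope $\xi$ and $\xi'$ on $\Sigma\times[0,\tfrac12-\epsilon]$, relative to $\Sigma_0$, so that both print $\mathcal{F}_-$ at $t=\tfrac12-\epsilon$. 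In the same way, using the reference foliation $\mathcal{F}_+$ divided by $\Gamma_1$, we normalise on $[\tfrac12+\epsilon,1]$ relative to $\Sigma_1$. Each isotopy is extended by a collar to the whole product.

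It remains to compare the two structures on the slab $\Sigma\times[\tfrac12-\epsilon,\tfrac12+\epsilon]$, with both boundary foliations now fixed. By condition \eqref{it:2} of Definition \ref{def:bseq}, outside the disk $D_1$ every $\Sigma_t$ in the slab stays divided by $\Gamma_1(0)|_{\Sigma\setminus D_1}$. We first make the movie of foliations on $(\Sigma\setminus D_1)\times[\tfrac12-\epsilon,\tfrac12+\epsilon]$ agree for both structures. To do this we apply Theorem \ref{Interpolation Lemma} with boundary conditions imposed along $\partial D_1$, using the annular collar convention from the introduction. On $D_1\times[\tfrac12-\epsilon,\tfrac12+\epsilon]$ the movie is a generic one-parameter family containing exactly one retrograde saddle–saddle connection. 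By Giroux's Crossing Lemma this family is, up to isotopy of one-parameter families of foliations, the standard family of Figure \ref{fig:Crossing-Lemma}. The genericity assumptions mean we need only find a path of generic families of foliations between the two movies that keeps exactly one retrograde crossing. We then conclude with the Global Reconstruction Lemma, whose solution space is contractible: this lets us lift a path of movies, each realised by a contact structure, to a contact isotopy.

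I expect the main obstacle to be this last lifting step. Along a deformation of the movie, every intermediate family must still be realised by some contact structure. To handle this, I would keep the deformation supported in $D_1$ and pass through families that stay convex except at the single crossing time. On convex slices, realisability comes from Theorem \ref{Interpolation Lemma}. Near the crossing, I would obtain it from the Local Reconstruction Lemma (Lemma \ref{closed local reconstruction}) applied to the standard local model. Gluing these pieces with a partition in $t$ should give an isotopy relative to the boundary of the slab, and composing the three isotopies completes the argument.
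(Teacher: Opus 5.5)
Your normalisation of the outer slabs (reparametrising $t$ and fixing $\mathcal{F}_\pm$ at $t=\tfrac12\pm\epsilon$ via the Realization and Uniqueness Lemma) is fine, and it is also how the paper finishes. The gap is the middle slab. Nothing in your argument makes $\xi$ and $\xi'$ agree near the non-convex slice, yet every step there presupposes such agreement.

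First, applying the Realization and Uniqueness Lemma to $(\Sigma\setminus D_1)\times[\tfrac12-\epsilon,\tfrac12+\epsilon]$ requires fixed, common boundary data along $\partial D_1\times[\tfrac12-\epsilon,\tfrac12+\epsilon]$. The collar convention is a hypothesis you must verify, not something you get for free. Admissibility gives no such agreement between $\xi$ and $\xi'$ there, and it says nothing at $t=t_1$ itself.

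Second, on $D_1$ the Crossing Lemma only describes how the dividing set changes across one retrograde saddle--saddle connection. It does not say that two movies on $D_1$ with the same ends and one crossing are smoothly isotopic as families of foliations; they can differ in their singularities, births and deaths, and local smooth data at the saddles. Producing a path between them through \emph{realisable} movies is the entire content of the lemma. The tools you invoke do not supply it:
\begin{itemize}
\item The Global Reconstruction Lemma concerns a single fixed movie, so it can only finish the argument once the movies already coincide.
\item The Local Reconstruction Lemma realises one foliation on one slice, with no control over the foliations on neighbouring slices. It therefore cannot realise a prescribed movie through the crossing.
\end{itemize}

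The missing idea is to avoid comparing movies near the singularities at all. Near a \emph{regular} point $p$ of the retrograde orbit, the flow-box theorem lets you isotope $\xi'$ so that both movies agree near $p$. Local uniqueness (isotopy relative to the characteristic foliations) then makes $\xi=\xi'$ on a small neighbourhood of $p$.

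The paper then applies Giroux's creation lemma repeatedly, for short times, to insert saddle--node pairs near $p$. A newly created negative node captures the unstable separatrix of the original negative saddle, and symmetrically on the other side. This blocks the original saddles from forming retrograde connections, so the only retrograde connection now occurs in a small box $D'\times[\tfrac12-\epsilon',\tfrac12+\epsilon']$ on which $\xi=\xi'$. A small perturbation then removes the extra saddle--saddle connections, so every other slice is convex.

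Only at this point does the Realization and Uniqueness Lemma apply legitimately on $(\Sigma\setminus D')\times[\tfrac12-\epsilon',\tfrac12+\epsilon']$, with genuinely common boundary data on $\partial D'$. It is then applied to the outer slabs as in your first step.
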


\begin{proof}
    Assume without loss of generality that $t_1=\frac{1}{2}$, $\epsilon = \frac{1}{4}$ and for $t \in [\frac{1}{4},\frac{3}{4}]$ the characteristic foliation looks as in Figure \ref{fig:Standard} . Denote by $D$ the disk associated to the unique bypass arc $c$. The first step is to modify $\xi$ and $\xi'$ inside $D \times [\frac{1}{4},\frac{3}{4}]$ so that:

    \begin{enumerate}
        \item The surfaces $\Sigma_t$ are convex iff $t \neq \frac{1}{2}$ where the surface is Morse-Smale but for a single retrograde saddle-saddle connection;
        \item $\xi$ and $\xi'$ remain unchanged outside of $D \times [\frac{1}{4},\frac{3}{4}]$;
        \item There exists a smaller disk  $D'$ and $\epsilon' < \frac{1}{4}$ so that $\xi$ and $\xi'$ coincide on  $D' \times [\frac{1}{2}-\epsilon', \frac{1}{2}+\epsilon']$;
        \item $D' \times \frac{1}{2}$ contains the retrograde connection;
    \end{enumerate}

    Once, we have these conditions fulfilled we can apply Lemma \ref{Convex Flexibility} first to $\Sigma \times [\frac{1}{2}-\epsilon',\frac{1}{2}+\epsilon']$ on the complement of $D'$ where the surfaces are convex and divided by the same dividing curves for both $\xi$ and $\xi'$. Thus we can match them up for some $\epsilon '' < \epsilon'$. Finally, we can apply Lemma \ref{Convex Flexibility} again to $\Sigma \times [0,\frac{1}{2}-\epsilon'']$ and $\Sigma \times [\frac{1}{2}+\epsilon'',1]$ where the surfaces are again dividied by the same movies of dividing curves and the characteristic foliations at the boundary coincide. So we can match them up on the full intervall which concludes the construction.

    Thus it remains to construct the first modification. The nuance in this proof is that we do not have access to Giroux's convex flexibility machinery and need to construct a local model by hand where we can control the smooth type of the characteristic foliation inside the disk.

    We will use Giroux's creation lemma \cite[Lemme 16]{Gi2} to introduce cancelling saddle-node pairs for small times: Pick a point $p,p'$ on the retrograde connections of $\xi$ and $\xi'$ and isotopy $\xi'$ inside $D^2 \times [\frac{1}{4},\frac{3}{4}]$ so that $p=p'$ and the movies of characteristic foliations $\xi$ and $\xi'$ agree in a neighborhood of this point. Isotopy $\xi'$ relative to the characteristic foliation so that it agrees with $\xi$ on a smaller neighborhood of $p$. Now on this smaller neighborhood we can pick the same contact forms for both $\xi$ and $\xi'$. Then we apply Giroux's creation lemma multiple times as in Figure \ref{fig:Standard-Disk-Creator}. The key here is to note that, for small times, say, the negative node on the left captures the unstable separatrix of the negative saddle and thus when we create the adjacent positive saddle-node pairs for small enough times then the unstable separatrix will not interact with them. Thus this negative node blocks that saddle from forming retrograde connections as long as it exists. This is enough to construct conditions $2)-4)$, however the foliation $\mathcal{F}_{\frac{1}{2}}$ has too many saddle-saddle connections. We fix this by slightly perturbing the saddle-saddle connections on the outer edge of this disk so that at $t=\frac{1}{2}$ they are no longer connected. This is possible since the contact condition is stable under $C^k$ small perturbations of $\beta$. Thus the constructed movie of foliations also fulfills $1)$.

    \begin{figure}
    \centering
    \includegraphics[width=1\linewidth,trim={1cm 1cm 0 0.5cm} ]{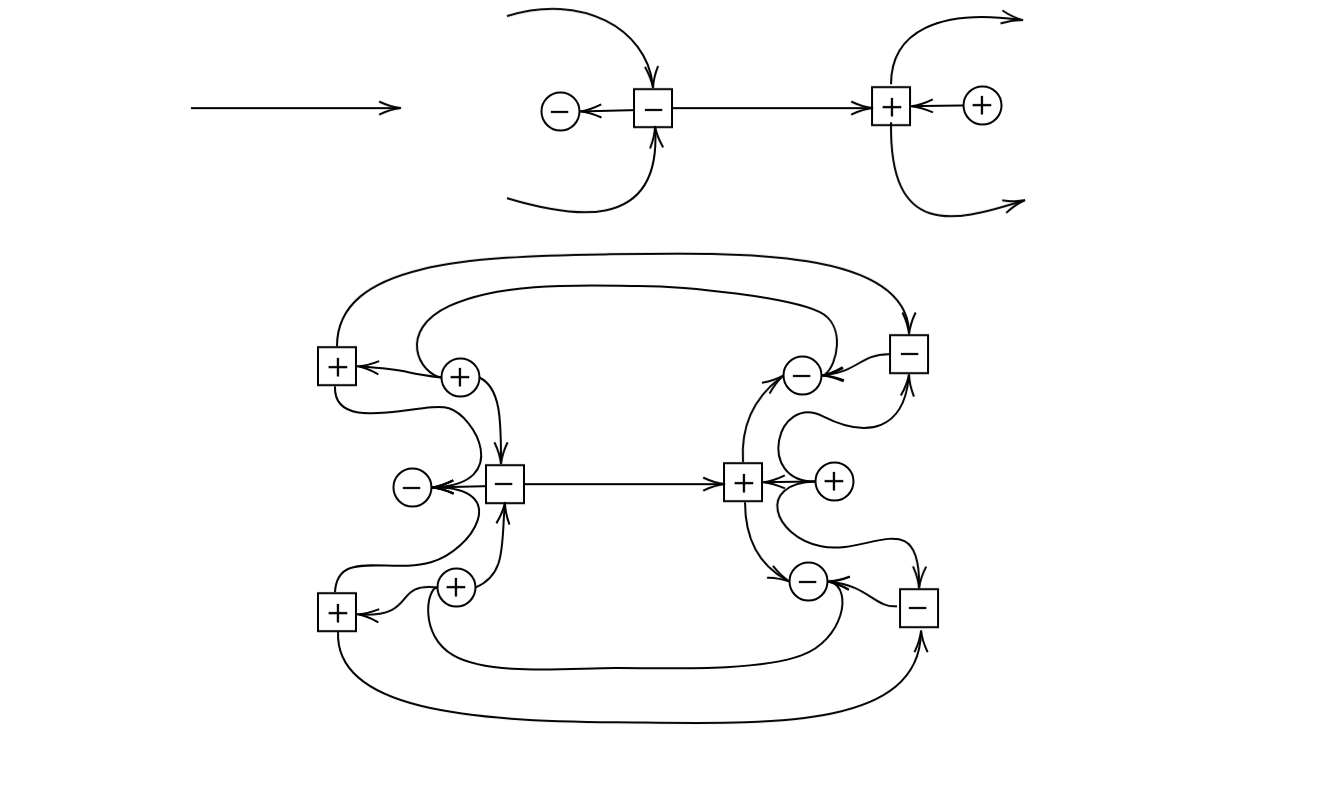}
    \caption{A sequence of saddle-node additions. Top left indicates one piece of the orbit of a retrograde connection. The top right indicates the new characteristic foliation obtained after adding two saddle-node pairs. Finally, (using symmetry of the neighborhood) we obtain the lower characteristic foliation after adding saddle-node pairs along the stable/unstable separatrices of the negative/positive saddle.}
    \label{fig:Standard-Disk-Creator}
\end{figure}
    
\end{proof}

\section{Bijectivity}

\begin{lemma} \label{Surjectivity}
    The map in Theorem \ref{thm:main} is surjective.
\end{lemma}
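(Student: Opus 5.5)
Given an enhanced bypass sequence $\mathcal{EB}=(\Gamma_0(r),c_1,\Gamma_1(r),\dots,c_n,\Gamma_n(r))$ with $\Gamma_0(0)=\Gamma_{start}$ and $\Gamma_n(1)=\Gamma_{end}$, I will construct a contact structure $\xi$ in Giroux normal form on $\Sigma\times[0,1]$ that prints $\mathcal{F}_0$ and $\mathcal{F}_1$ on the ends and admits $\mathcal{EB}$. The construction is a concatenation of slabs. First fix times $0<t_1<\dots<t_n<1$ and $\epsilon>0$ small. Then build:
\begin{itemize}
\item ``convex slabs'' over the intervals $[t_{i-1}+\epsilon,t_i-\epsilon]$, which realise the isotopies $\Gamma_{i-1}(r)$;
\item ``bypass slabs'' over $[t_i-\epsilon,t_i+\epsilon]$, each containing exactly one retrograde saddle--saddle connection, located inside the disc $D_i$ around $c_i$.
\end{itemize}
These are glued along convex surfaces whose characteristic foliations match on the nose.

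\textbf{Bypass slabs.} For each $i$, apply Lemma~\ref{enough foliations} to $\Gamma_{i-1}(1)$ with the single arc $c_i$. This gives a characteristic foliation $\mathcal{G}_i$ that is the standard model of Figure~\ref{fig:Standard} in $D_i$ and is divided by $\Gamma_{i-1}(1)$ outside $D_i$. I then write down an explicit model movie on $D_i\times[t_i-\epsilon,t_i+\epsilon]$: the crossing evolution of Figure~\ref{fig:Crossing-Lemma}, where two saddles slide past each other through a single retrograde connection at $t=t_i$, and which is constant near $\partial D_i$. Outside $D_i$ the movie is $t$-independent.
\begin{itemize}
\item \emph{Realising the model in $D_i$.} Take a local contact form $dt+\beta_t$ (or $u\,dt+\beta_t$), with $\beta_t$ chosen so that the contact condition holds. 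This is a standard local computation; alternatively one can invoke the creation/elimination construction used in the proof of Lemma~\ref{Analytical flexibility}.
\item \emph{Outside $D_i$.} A $t$-invariant contact structure exists because the foliation there is divided, by Theorem~\ref{Convex Flexibility}.
\item \emph{Gluing.} The two pieces agree near $\partial D_i\times[t_i-\epsilon,t_i+\epsilon]$ because the movie is constant there.
\end{itemize}
The foliations at $t_i\pm\epsilon$ are Morse--Smale and divided by $\Gamma_{i-1}(1)$ and $\Gamma_i(0)$ respectively, by the Crossing Lemma picture. Condition~(\ref{it:2}) of Definition~\ref{def:bseq} holds because nothing changes outside $D_i$.

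\textbf{Convex slabs.} On $[t_{i-1}+\epsilon,t_i-\epsilon]$, identified with $[0,1]_r$, choose a family of Morse--Smale foliations $\mathcal{F}_t$ divided by $\Gamma_{i-1}(r)$. The endpoints are prescribed:
\begin{itemize}
\item at the two ends of the whole interval they are $\mathcal{F}_0$ or $\mathcal{F}_1$;
\item in between they are the foliations at the ends of the adjacent bypass slabs.
\end{itemize}
Such a family exists because the space of foliations divided by a given (moving) multicurve is connected, as in Giroux's flexibility. One can interpolate through foliations divided by $\Gamma_{i-1}(r)$, for instance by transporting via an ambient isotopy realising $\Gamma_{i-1}(r)$ and then connecting two foliations divided by the same curve. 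Theorem~\ref{Convex Flexibility} then produces a contact structure printing this movie, with the prescribed ends. Concatenating all slabs gives $\xi$, which is smooth after making each movie $t$-constant near the gluing levels.

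\textbf{Genericity and the main obstacle.} The remaining point is that the whole movie is generic and every $\Sigma_t$ with $t\neq t_i$ is convex, so that $\xi$ is in Giroux normal form. Convexity away from the $t_i$ is automatic in the convex slabs. In the bypass slabs one must ensure that no further retrograde connections or degenerate closed orbits appear as the model evolves. I expect the main obstacle to be exactly this: producing, by hand, a contact model in $D_i$ whose movie has precisely one retrograde connection and stays convex otherwise, since convex flexibility is unavailable at $t_i$. I would first build the model using the explicit saddle-slide of Figure~\ref{fig:Crossing-Lemma}. I would use node-blocking of separatrices, as in the proof of Lemma~\ref{Analytical flexibility}, so that the unstable separatrix of the negative saddle cannot reach any other positive singularity, and then perturb away the extra saddle--saddle connections. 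Once this is done, $\xi$ admits $\mathcal{EB}$ by construction, so its class maps to the class of $\mathcal{EB}$ and the map is surjective.
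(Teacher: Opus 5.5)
Your skeleton matches the paper's: foliations at the bypasses come from Lemma~\ref{enough foliations}, the convex slabs from Theorem~\ref{Convex Flexibility}, and the pieces are concatenated. The gap is the one step convex flexibility does not cover, namely producing the bypass slab, which you leave open yourself.

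\textbf{The gap.} Prescribing the whole crossing movie in $D_i$ first and then looking for a contact form printing it is not a ``standard local computation''. By the Global Reconstruction Lemma, a prescribed movie may be printed by no contact structure at all.
\begin{itemize}
\item Taken literally, $dt+\beta_t$ makes every singular point positive, since $\xi_p=T_p\Sigma_t$ is co-oriented by $dt$ at every zero of $\beta_t$. So the negative saddle cannot occur.
\item With $u\,dt+\beta_t$, the contact condition becomes a transport inequality for $u$ along the orbits of $\mathcal{F}_t$. Along the retrograde orbit, $u$ must change sign from negative to positive, and this is exactly where the direction of the crossing is forced. It needs a real argument.
\end{itemize}
Your fallback is circular. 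The creation/elimination and node-blocking argument in the proof of Lemma~\ref{Analytical flexibility} modifies contact structures that \emph{already} contain a retrograde connection. It therefore cannot serve as an existence proof for such a structure.

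\textbf{The missing idea.} Prescribe only the level $t_i$ and let the contact condition generate the movie.
\begin{itemize}
\item Take the foliation $\mathcal{G}_i$ of Lemma~\ref{enough foliations}. After a small perturbation, it is Morse--Smale apart from the one retrograde connection.
\item Apply the Local Reconstruction Lemma~\ref{partial Reconstruction Lemma} to $\mathcal{G}_i$. This gives a contact structure on a thin slab $\Sigma\times[t_i-\delta,t_i+\delta]$ printing $\mathcal{G}_i$ on $\Sigma_{t_i}$.
\item For $|t-t_i|$ small, $\mathcal{F}_t$ is $C^\infty$-close to $\mathcal{G}_i$. By openness of the Morse--Smale condition, its only possible defect is the continuation of that connection.
\item Giroux's Crossing Lemma says this connection is broken for $t\neq t_i$, as in Figure~\ref{fig:Crossing-Lemma}.
\end{itemize}
Hence the thin slab is convex except at $t_i$, and its two end dividing sets differ by the bypass along $c_i$. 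No hand-built model, blocking nodes, or perturbation of extra saddle connections is needed. This is how the paper gets the bypass slab (spelled out in the proof of Lemma~\ref{Injectivity 2}); the Interpolation Lemma then glues as you describe.

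\textbf{Two smaller points.}
\begin{itemize}
\item In the convex slabs you cannot ask for $\mathcal{F}_t$ to be Morse--Smale for all $t$. Hyperbolic singularities persist along paths of Morse--Smale foliations, so the number of singularities could never change. You only need the family to be divided by $\Gamma_{i-1}(r)$, which is all Theorem~\ref{Convex Flexibility} requires and still gives convexity.
\item Equal movies near $\partial D_i$ do not make two contact forms agree there. Your gluing would require choosing the forms themselves to coincide, not just the foliations.
\end{itemize}
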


\begin{proof}
    Let $\mathcal{EB}$ be an arbitrary enhanced bypass sequence with some fixed boundary conditions $\Gamma_{start},\Gamma_{end}$. Choose characteristic foliations for the dividing curves $\Gamma_i(1),\Gamma_{i+1}(0)$ so they are related by $c_i$ as in Figure \ref{fig:Crossing-Lemma}, this is possible by Lemma \ref{enough foliations}. Now we have fixed movies of dividing curves $\Gamma_i(r)$ and boundary foliations. So we may apply Lemma \ref{Interpolation Lemma} to glue these together and obtain that the thus created contact structure admits the given enhanced bypass sequence $\mathcal{EB}$ which proves the Lemma.
\end{proof}

\begin{lemma} \label{Injectivity 1}
    Let $\xi$ and $\xi'$ be two contact structures which admit the same EBS. Then $\xi$ and $\xi'$ are contact isotopic.
\end{lemma}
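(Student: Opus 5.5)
The plan is to reduce to the single-bypass situation of Lemma \ref{Analytical flexibility} by cutting $\Sigma \times [0,1]$ into slabs, each containing at most one retrograde time, and then to glue the isotopies back together. Since $\xi$ and $\xi'$ admit the same EBS $\mathcal{EB}=(\Gamma_0(r),c_1,\dots,c_n,\Gamma_n(r))$, both are in Giroux normal form with $n$ retrograde times $t_1<\dots<t_n$ and $t_1'<\dots<t_n'$. After a reparametrisation of the $t$-coordinate, which is a contact isotopy fixing $\Sigma_0$ and $\Sigma_1$, I will arrange that $t_i=t_i'$ and that a common $\epsilon$ works for both in Definition \ref{def:bseq}. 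I will also arrange that the identifications of the intervals $[t_{i-1}+\epsilon,t_i-\epsilon]$ with $[0,1]_r$ agree, so that on each convex interval the same multicurve $\Gamma_{i-1}(r)$ divides $\Sigma_t$ for both structures.

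Next, I will choose intermediate convex levels $s_i\in(t_i+\epsilon,t_{i+1}-\epsilon)$. The characteristic foliations of $\xi$ and $\xi'$ on $\Sigma_{s_i}$ are both divided by the same multicurve, but they may differ. By Lemma \ref{enough foliations}, I fix one reference Morse--Smale foliation $\mathcal{G}_i$ divided by that multicurve. Using Theorem \ref{Interpolation Lemma} (Giroux flexibility on a thin collar where all levels are convex with the given dividing movie), I isotope both $\xi$ and $\xi'$, relative to $\Sigma_0\cup\Sigma_1$ and supported near $\Sigma_{s_i}$, so that each prints $\mathcal{G}_i$ on $\Sigma_{s_i}$. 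This does not change the admitted EBS up to Isotopy. It is also compatible with keeping the boundary annuli fixed, since the foliation there is $t$-independent.

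Now each slab $\Sigma\times[s_{i-1},s_i]$ (with $s_0=0$, $s_n=1$) carries two contact structures with identical boundary foliations. Both admit the EBS $(\Gamma_{i-1}|,c_i,\Gamma_i|)$, which contains a single bypass arc. Lemma \ref{Analytical flexibility} additionally demands that the dividing movies be $r$-independent. To achieve this, I will first use Theorem \ref{Interpolation Lemma} on the convex subintervals to shrink the time during which the dividing curve moves into collars near $s_{i-1}$ and $s_i$. Both structures are then matched on those collars, being convex with the same dividing movie and the same boundary foliations. The remaining middle piece, containing $t_i$, then has $r$-independent dividing curves on its convex parts. Applying Lemma \ref{Analytical flexibility} slab by slab gives contact isotopies relative to the boundary of each slab. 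Because they fix $\Sigma_{s_i}$ together with its foliation, and via Moser a neighbourhood of it, they concatenate to a global contact isotopy from $\xi$ to $\xi'$.

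The main obstacle is the matching on the intermediate levels and collars. Theorem \ref{Interpolation Lemma} is phrased for families of prescribed foliations, so I must check that fixing the foliation on a single convex level $\Sigma_{s_i}$, and isotoping relative to it, can be done on a collar without creating new retrograde connections. I would handle this by keeping all levels in the collar convex throughout, using the connectedness statement of Theorem \ref{Interpolation Lemma}. A secondary subtlety is that the condition in Lemma \ref{Analytical flexibility} that the disk $D$ and the local model of Figure \ref{fig:Standard} sit in the same place for $\xi$ and $\xi'$ requires a preliminary isotopy aligning their bypass disks $D_i$. This is possible because both are neighbourhoods of the same arc $c_i$, up to the Isotopy relation.
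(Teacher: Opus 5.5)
Your proposal is correct and follows essentially the paper's route. You reparametrise $t$ so that the interval identifications agree, use Theorem \ref{Interpolation Lemma} to make $\xi$ and $\xi'$ print common reference foliations on chosen convex levels, and then apply Lemma \ref{Analytical flexibility} around each $t_i$ and Theorem \ref{Convex Flexibility} on the remaining convex slabs.

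The only difference is where you cut. The paper places the common foliations directly on $\Sigma_{t_i\pm\epsilon}$, where condition (3) of Definition \ref{def:bseq} already makes the dividing curves on $[t_i-\epsilon,t_i+\epsilon]$ independent of $r$. That makes your midpoints $s_i$ and the collar-shrinking step unnecessary. Note also that your collars only have equal foliations at their inner ends after exactly this kind of matching, so your sentence asserting ``the same boundary foliations'' there needs that step anyway.
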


\begin{proof}
    First, rescale the interval $[0,1]$ via an isotopy $\psi_s:[0,1]\rightarrow[0,1]$ so that the identifications of the domains of the dividing curves $\Gamma_i$ with subsets of $[0,1]$ agree for $\xi$ and $\xi'$. Next, pick characteristic foliations $\mathcal{F}_{t_i\pm \epsilon}$ divided by the appropriate embedded $\Gamma_j$ and apply Lemma \ref{Interpolation Lemma} to $\xi$ and $\xi'$ to all intervals of the shapes $[t_i-\frac{3}{2}\epsilon,t_i-\frac{1}{2}\epsilon],[t_i+\frac{1}{2}\epsilon,t_i+\frac{3}{2}\epsilon]$ so that they print $\mathcal{F}_{t_i-\epsilon}$ onto $\Sigma_{t_i\pm \epsilon}$. Thus, we can apply Lemma \ref{Analytical flexibility} to each $[t_i-\epsilon,t_i+\epsilon]$-slice and Lemma \ref{Convex Flexibility} to the complements which yields the desired result.
\end{proof}

\begin{lemma} \label{Injectivity 2}
     Let $\mathcal{EB}$ and $\mathcal{EB'}$ be two enhanced bypass sequences which are related by iterated Isotopy, Far Commutation or Trivial Insertion. Then there are contact structures $\xi,\xi'$ isotopic to one another and which admit $\mathcal{EB},\mathcal{EB}'$.  
\end{lemma}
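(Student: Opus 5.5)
It suffices to treat a single move, since equivalence is generated by finitely many moves and contact isotopy is transitive; Lemma \ref{Injectivity 1} makes the choice of realising contact structure irrelevant. So, for each of Isotopy, Far Commutation and Trivial Insertion, I will construct explicit $\xi$ admitting $\mathcal{EB}$ and $\xi'$ admitting $\mathcal{EB}'$ and exhibit a contact isotopy between them. In each case the contact structures are built as in Lemma \ref{Surjectivity}: foliations from Lemma \ref{enough foliations} at the boundaries of the bypass slabs, glued by Lemma \ref{Interpolation Lemma} on the convex parts.

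\textbf{Isotopy.} Let $\mathcal{EB}_\sigma$, $\sigma\in[0,1]$, be a path of enhanced bypass sequences with fixed $\Gamma_{start},\Gamma_{end}$. The bypass arcs $c_i^\sigma$ and discs $D_i^\sigma$ move by an ambient isotopy $\phi_\sigma$ of $\Sigma$, fixed near $\partial\Sigma$. The model slabs $D_i\times[t_i-\epsilon,t_i+\epsilon]$ are transported by $\phi_\sigma$, which gives a family of local contact structures realising $c_i^\sigma$. On the convex intervals, the dividing movies $\Gamma_i^\sigma(r)$ form a two-parameter family. I will apply the parametric form of Lemma \ref{Interpolation Lemma} (contractibility/connectedness of the realising space, together with the Global Reconstruction Lemma) to get a continuous family $\xi_\sigma$ with fixed boundary foliations on $\Sigma_0,\Sigma_1$. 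Moser's trick then turns $\xi_\sigma$ into a contact isotopy.

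\textbf{Far Commutation.} Since $D_i\cap D_{i+1}=\emptyset$ and the middle movie is $r$-independent, Lemma \ref{enough foliations} with $k=2$ gives one foliation $\mathcal{F}$ carrying both standard models in disjoint discs. I then build a contact structure whose movie is $t$-independent outside $D_i\cup D_{i+1}$. Inside $D_i\times[a,b]$ it performs the crossing of Figure \ref{fig:Crossing-Lemma} at time $\tau_i$, and inside $D_{i+1}\times[a,b]$ it performs the crossing at time $\tau_{i+1}$. Because the supports are disjoint and both pieces are glued to the same vertically invariant structure outside, varying $(\tau_i,\tau_{i+1})$ from $\tau_i<\tau_{i+1}$ to $\tau_i>\tau_{i+1}$ gives a family of contact structures. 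At $\tau_i=\tau_{i+1}$ the structure has two simultaneous retrograde connections and is not in Giroux normal form, but the family is still a path of contact structures fixed at the boundary. Moser's trick therefore gives the isotopy between the endpoints, which admit $\mathcal{EB}$ and $\mathcal{EB}'$ respectively.

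\textbf{Trivial Insertion.} This is the main obstacle. I need a local model showing that the trivial bypass slab is contact isotopic to a slab realising the isotopy $\Gamma_T(r)$ alone. I would first try to build it directly from characteristic foliations. Use Lemma \ref{enough foliations}(4): the trivialising bigon contains exactly one node, and the retrograde saddle–saddle connection is adjacent to it. Then apply Giroux's elimination/creation lemma \cite{Gi2}, in the manner of the proof of Lemma \ref{Analytical flexibility}, to the saddle and node in that bigon, before and after the crossing. This yields a one-parameter family of movies in which the retrograde connection is shortened until the connected saddles cancel against the node. The result is a movie with no retrograde connection, whose dividing curves undergo exactly the isotopy $\Gamma_T(r)$ supported near $c_T$ and its bigon. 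The delicate points are two. First, the family stays contact throughout, which uses the sign conditions on the singularities. Second, the one non-generic moment (a birth–death) is avoided in the final structure. If this direct route fails, the fallback is to check that, restricted to a ball neighbourhood of $c_T\times[t_T-\epsilon,t_T+\epsilon]$, both structures are tight with the same boundary. Since the trivial bypass does not change the dividing set up to isotopy, they then agree by a uniqueness statement on the ball, used only locally and in a way that does not depend on Eliashberg's theorem, to avoid circularity. Globally, Lemma \ref{Interpolation Lemma} then glues this local isotopy to the rest of the slab.
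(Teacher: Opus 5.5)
Your proposal is correct and is essentially the paper's proof. Far Commutation slides the crossing times of the two retrograde connections in disjoint standard discs past each other; the paper instead perturbs a simultaneous-crossing structure in both directions, which traces the same path. Trivial Insertion uses the unique node in the bigon from Lemma \ref{enough foliations}(4) and Giroux's Elimination Lemma to suppress, for a short time around the crossing, the pair formed by that node and the adjacent saddle; note that only this one saddle cancels, not both connected saddles as you write.

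The only real divergence is Isotopy. There the paper pushes a single $\xi$ forward by the level-preserving diffeomorphisms $(x,t)\mapsto(\phi_{t,s}(x),t)$, which is cheaper than your parametric realisation plus Moser. Your tight-ball fallback is unnecessary and would be circular, since uniqueness on the ball is derived from this result.
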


\begin{proof}
    By iteration, it is sufficient to show the statement for $\mathcal{EB}$ and $\mathcal{EB'}$ related by either isotopy, Far Commutation or Trivial Insertion. Assume they are related by Isotopy and pick any contact structure $\xi$ which admits $\mathcal{EB}$ then pick an isotopy $\phi_{t,s}$ which pushes each $\Gamma_i(r)$ to $\Gamma'_i(r)$ where the intervals of definition are chosen similarly as in the definition of admissibility of an enhanced bypass sequence to the contact structure $\xi$. By assumption, the endpoints $\Gamma_0(0)$ and $\Gamma_1(1)$ are fixed so we can choose $\phi_{0,s}=\phi_{1,s}=id$ and $\phi_{t,0}=id$. Now for each $s$ $\phi_{t,s}:\Sigma \rightarrow \Sigma$ pushes $\xi$ forward to a $1$-parameter family of contact structures $\xi_{s}$ where $\xi_1$ is as desired.

    Next, we turn to Far Commutation. Without loss of generality, we can assume that $\mathcal{EB}$ has exactly two bypass arcs in its sequence and all the $1$-parameter families of dividing curves can be chosen $r$-invariant. We apply Lemma \ref{enough foliations} to $\Gamma_{start}$ and the two disjoint bypass arcs. This yields a characteristic foliation $\mathcal{F}$ which is divided by $\Gamma_{start}$ in the complement of two standard retrograde disks. Then Lemma \ref{partial Reconstruction Lemma} tells us that this integrates to a contact structure which by Giroux's crossing lemma is convex except at a $t$-value. One can now easily pick two $C^K$-small variations of the vectorfield that pushes one of the retrograde connections slightly up in time or down in time which yields the desired result. These have the desired $\mathcal{EB}$ and $\mathcal{EB}'$.

    Finally, we turn to Trivial Insertion. Without loss of generality, we can assume that $\mathcal{EB}$ has exactly one bypass arc which is the trivial one and that the movie of dividing curves is constant away from the bypass disk. We again appeal to Lemma \ref{enough foliations} which integrates our choice of dividing curve and bypass arc to a characteristic foliation and Lemma \ref{partial Reconstruction Lemma} to turn this into a contact structure with the desired EBS. Now the bypass arc and $\Gamma$ cuts-off a bigoncomponent and the boundary of the standard disk as drawn there is a saddle. So there is a single saddle-node pair close to the disk. Using Giroux's Elimination Lemma \cite[Lemme 2.14.]{Gi2}, we can suppress this saddle-node pair for a small time around the non-convex level set. This modification leaves no further saddles of opposite sign to connect to, so all level sets are now convex as desired.
\end{proof}

\bibliographystyle{alpha}
\bibliography{fob}
\end{document}